\renewcommand{\qedsymbol}{\relax}
\begin{document}


\newtheorem{theorem}{Theorem}[section]
\newtheorem*{TheoremA*}{Theorem A}
\newtheorem*{TheoremB*}{Theorem B}
\newtheorem{proposition}[theorem]{Proposition}
\newtheorem{lemma}[theorem]{Lemma}
\newtheorem{Corollary}[theorem]{Corollary}

\theoremstyle{definition}
\newtheorem{step}{Step}
\newtheorem{Definition}[theorem]{Definition}
\newtheorem{Fact}[theorem]{Fact}
\newtheorem{Conjecture}[theorem]{Conjecture}

\let\tilde=\widetilde
\newcommand{\Z}{\mathbb{Z}}
\newcommand{\noqed}{\def\qedsymbol{}}
\newcommand{\LEFT}{\relax}
\newcommand{\RIGHT}{\relax}
\newcommand{\W}{\mathscr{W}}
\let\iotax=i
\let\xi=\xi
\title{Decompositions involving Anick's spaces}
\author{Brayton Gray}
\address{Department of Mathematics, Statistics and Computer Science\\
University of Illinois at Chicago\\
851 S.~Morgan Street\\
Chicago, IL, 60607-7045}

\email{brayton@uic.edu}



\maketitle

The goal of this work is to continue the investigation
of the Anick fibration and the associated spaces.
Recall that this is a $p$-local fibration sequence:
\[
\Omega^2S^{2n+1}\xrightarrow{\pi_n}S^{2n-1}\xrightarrow{} T\xrightarrow{} \Omega S^{2n+1}
\]
where $\pi_n$ is a compression of the $p^r$th power
map on $\Omega^2S^{2n+1}$. This fibration was first described
for $p\geqslant 5$ as the culmination of a 270 page book \cite{A}.
In \cite{AG}, the authors described an $H$ space structure
for the fibration sequence. Its relationship to
EHP spectra was discussed \cite{G3} as well as first
steps to developing a universal property.

Much work has been done since then to find
a simpler construction, and this was obtained
for $p\geqslant 3$ in \cite{GT}. This new construction also
reproduces the results of \cite{AG}. It is in the
context of these new methods that this work is
developed and we assume a familiarity with
\cite{GT}.

One of the main features of the
construction is a certain fibration sequence:
\[
\Omega G\xrightarrow{h} T\xrightarrow{i} R\xrightarrow{\rho} G
\]
where $h$ has a right homotopy inverse $g\colon T\rightarrow \Omega G$
and the adjoint of~$g$:
\[
\tilde{g}\colon\Sigma T\rightarrow G
\]
also has a right homotopy inverse $f\colon G\rightarrow \Sigma T$.
Together these maps define an $H$ space structure
on $T$ and a co-$H$ space structure on $G$, and both
$G$ and $T$ are atomic. Furthermore $R\in \W_r^{\infty}$, the
class of spaces that are the one point union of
$\bmod\  p^s$ Moore spaces for $r\leqslant s$.

For some applications it would be helpful
to have a better understanding of the map~$\rho$.
In order to accomplish this, we reconstruct a
space $D$ from \cite{A}. $D$ is closely related to $G$.
Although the formal properties of $D$ are not as
simple as $G$ (it is not a co-$H$ space)
other properties are simpler (for example
Theorem~A part \textup{(b)} below). Define
\[
C=\bigvee_{i=1}^{\infty} P^{2np^i+1}\left(p^{r+i-1}\right).
\]
\begin{TheoremA*}
There is a cofibration sequence:
\[
C\xrightarrow{c}G\rightarrow D
\]
and a fibration sequence:
\[
F\rightarrow D\rightarrow S^{2n+1}
\]
such that

\textup{(a)}\hspace*{0.5em}$\displaystyle
H^j(D)=\begin{cases}
		Z/p^r&\mbox{if}\quad j=2n\\
	Z/p&\mbox{if}\ j=2np^s\quad s>0\\
	0&otherwise
\end{cases}$

\textup{(b)}\hspace*{0.5em}$\displaystyle
H^j(F)=
\begin{cases}
	Z_{(p)}&if $j=2ni$\\
	0&otherwise
\end{cases}$

\textup{(c)}\hspace*{0.5em}There is a diagram of fibration sequences:
\[
\begin{CD}
T@=T\\
@V{i}VV
@V{i'}VV\\
R@>>>{W}\\
@V{\rho}VV@VVV\\
G@>>>D
\end{CD}
\]
where $i$ and $i'$ are null homotopic.

\textup{(d)$^{\ast}$}\footnotetext{$^{\ast}$Note that $Z(p)/m$ is isomorphic to $Z/{}_p\nu(m)$ where
$\nu(m)$ is the number of powers of $p$ in
$m$.}\hspace*{0.5em}$\displaystyle
H^j(W)=\begin{cases}
	Z_{(p)}/ip^{r-1}&\mbox{if}\quad j=2ni=2np^s\\
	Z_{(p)}/ip^r&\mbox{if}\quad j=2ni, otherwise\\
	0&otherwise
\end{cases}$

\textup{(e)}\hspace*{0.5em}Furthermore there is a homotopy
commutative diagram of fibration sequences:
\[
\begin{CD}
	S^{2n-1}@>>>T@>>>\Omega S^{2n+1}\\
	@VVV@VVV@VVV\\
	W@=W@>>>PS^{2n+1}\\
	@VVV@VVV@VVV\\
	F@>>>D@>>>S^{2n+1}
\end{CD}
\]
with $\Omega F\simeq S^{2n-1}\times \Omega W$.
\end{TheoremA*}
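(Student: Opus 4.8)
The strategy is to construct $c$ and the spaces $D$ and $F$ by hand, to extract (a), (b) and (d) from the defining (co)fibrations by Serre spectral sequences, and to build the two square diagrams by lifting $c$ through $\rho$ and comparing fibres with cofibres.

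First I would pin down $H^{*}(G)$. Since $f$ splits $\tilde g$, $G$ is a retract of $\Sigma T$, so $H^{*}(G)$ is a summand of $H^{*-1}(T)$; together with the known (co)homology of the Anick space $T$ this isolates the cyclic torsion subgroups of $G$ sitting in degrees $2np^{i}$ that are too divisible, precisely the ones to be trimmed. For each $i\geqslant 1$ I would construct a map $P^{2np^{i}+1}(p^{r+i-1})\to G$ detecting the appropriate index-$p$ subgroup in that degree; since $G$ is a co-$H$ space one can split off the bottom Moore space $P^{2n+1}(p^{r})$ and feed it through the James--Hopf and Whitehead-product constructions of \cite{GT} (or, equivalently, work on $\Sigma T$ and post-compose with $f$) to realise the higher cells. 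Their wedge is $c\colon C\to G$; set $D=\operatorname{cofib}(c)$, with $G\to D$ the cofibre inclusion. Part (a) is then the long exact (co)homology sequence of $C\xrightarrow{c}G\to D$: in the relevant degrees $c$ is the inclusion of a subgroup of index $p$, so the cofibre retains exactly $\mathbb{Z}/p^{r}$ in degree $2n$, $\mathbb{Z}/p$ in the degrees $2np^{s}$, and nothing else.

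Next I would produce the fibration $F\to D\to S^{2n+1}$. Composing $G\xrightarrow{f}\Sigma T$ with $\Sigma T\to\Sigma\Omega S^{2n+1}\xrightarrow{\operatorname{ev}}S^{2n+1}$ (the suspension of $T\to\Omega S^{2n+1}$ followed by evaluation) gives a map $G\to S^{2n+1}$, and I must check that $C\xrightarrow{c}G\to S^{2n+1}$ is null so that it extends to $q\colon D\to S^{2n+1}$; this needs a separate argument --- for instance that $c$ factors through the homotopy fibre of $G\to S^{2n+1}$, or a direct count in $\pi_{*}(S^{2n+1})$ exploiting that the cells of $C$ are highly divisible --- and is the delicate point at this stage. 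Put $F=\operatorname{fib}(q)$. Part (b) comes from the Serre spectral sequence of $F\to D\to S^{2n+1}$: using (a) and that the relevant transgression is multiplication by a unit multiple of $p^{r}$, the differentials are forced and leave a single $\mathbb{Z}_{(p)}$ in every degree $2ni$.

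For (c), (d) and (e) I would lift $c$ through $\rho$. Since $R$ is a one-point union of $\bmod\ p^{s}$ Moore spaces with $s\geqslant r$ and each summand of $C$ is a $\bmod\ p^{r+i-1}$ Moore space with $r+i-1\geqslant r$, the lifting obstructions vanish summand by summand, giving $\hat c\colon C\to R$ with $\rho\hat c\simeq c$; set $W=\operatorname{cofib}(\hat c)$, so that $\rho$ extends to $W\to D$, the square of (c) commutes, and $i'=\bigl(T\xrightarrow{i}R\to W\bigr)$ is null because $i$ is. The key claim is that $T\to W\to D$ is a fibration sequence: over $G\subset D$ it is $T\to R\to G$, the lift $\hat c$ fibre-homotopically trivialises the $T$-bundle pulled back to $C$ (this is where the $H$-space structure of the \cite{GT} fibration enters, making $\rho$ principal-like, so that a section trivialises), and hence the trivial fibration $T\times CC$ glues on over the cone on $C$; the resulting total space is $W$. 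Part (d) follows from the Serre (co)homology sequence of $S^{2n-1}\to W\to F$ (or of $T\to W\to D$) together with (a), (b), and the elementary arithmetic recorded in the footnote. For (e): the right-hand column is the path--loop fibration over $S^{2n+1}$, the bottom row is $F\to D\xrightarrow{q}S^{2n+1}$, and the middle column is $T\to W\to D$; since the composite $W\to D\to S^{2n+1}$ is null, $W\to D$ factors as $W\to F\hookrightarrow D$, and comparing fibres down the columns (the fibre of $W\to D$ is an extension of $\operatorname{fib}(F\hookrightarrow D)=\Omega S^{2n+1}$ by $\operatorname{fib}(W\to F)$, and must equal $T$) pins the left column to $S^{2n-1}\to W\to F$ and the top row to the Anick fibration $S^{2n-1}\to T\to\Omega S^{2n+1}$. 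Looping the left column gives $\Omega W\to\Omega F\xrightarrow{\partial}S^{2n-1}$; since $i'$ is null, $\Omega D\simeq T\times\Omega W$, and under this identification $\partial$ is $\Omega F\hookrightarrow\Omega D\simeq T\times\Omega W\to T$ followed by a retraction of $T$ onto its bottom sphere $S^{2n-1}$, which $S^{2n-1}\hookrightarrow T$ splits; multiplying this section against $\Omega W$ by the loop product yields $S^{2n-1}\times\Omega W\simeq\Omega F$.

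The genuinely delicate step, I expect, is the construction of $W$ and the proof that $T\to W\to D$ is a fibration sequence: this is where the cofibre description of $W$ (built from $R$ by coning off $C$, compatibly with $G\to D$) must be reconciled with the fibre description, and fibres do not commute with cofibres in general. Making the gluing of the trivial fibration over the cone on $C$ rigorous --- equivalently, proving that the $T$-bundle pulled back to $C$ via $c$ is fibre-homotopically trivial, which is exactly where the $H$-space structure of the \cite{GT} fibration is indispensable --- is the crux, and it is what makes the fibre comparisons in diagrams (c) and (e) go through as well.
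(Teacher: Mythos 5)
Your overall plan diverges from the paper in a way that creates a genuine gap at exactly the point you flag as ``the delicate step.''  The paper does \emph{not} construct $W$ as a cofiber and then argue that $T\to W\to D$ is a fibration; it constructs $W$ directly as a homotopy fiber.  Concretely, starting from the fibration $G_k\to S^{2n+1}\{p^r\}$ already present in \cite{GT}, the lift $\widetilde c$ of $c$ through $R_k$ lands in the fiber $E_k$ of that map, which is what makes $D_k\to S^{2n+1}\{p^r\}$ (and hence $D_k\to S^{2n+1}$) exist --- this is how the paper handles the extension you note ``needs a separate argument.''  One then takes $J_k=\operatorname{fib}(D_k\to S^{2n+1}\{p^r\})$, builds a map $\widetilde\gamma_k\colon J_k\to BW_n$ compatible with $\nu\colon\Omega^2S^{2n+1}\to BW_n$ by a double induction over $k$ and over the skeleta of $F_k$ (using Theorem~1.5, a strong lifting property for $[\,\cdot\,,BW_n]$ across cell attachments with degree-$p$ attaching data), and \emph{defines} $W_k=\operatorname{fib}(\widetilde\gamma_k)$.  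The fibration squares in (c), (d), (e) then drop out as pullbacks along $G_k\to D_k$, with no fibre-versus-cofibre comparison to perform.  Your proposed gluing (``the $T$-bundle pulled back to $C$ is fibre-homotopically trivial, hence glue $T\times CC$'') is precisely the cofibre/fibre mismatch you worry about; it is not carried out in the paper, and I do not see how to make it work.

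There is also a concrete error in your definition of $W$.  You set $W=\operatorname{cofib}(\hat c\colon C\to R)$, but the cofibration the paper actually obtains (\cite[Lemma A6]{AG}, used in Section~3) is $C\rtimes T\to R\to W$, not $C\to R\to W$.  Since $C$ is a wedge of suspensions, $C\rtimes T\simeq C\vee(C\wedge T)$; coning off only the $C$-summand leaves all the $C\wedge T$ cells in place, so $\operatorname{cofib}(\hat c)$ is strictly larger than $W$ and its cohomology does not match part~(d).  The map $C\rtimes T\to R$ that needs to be coned off is the extension of the trivialized pullback $C\times T\to R$, not merely the section $\hat c$.

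Finally, your sketch of (b) says the transgression is ``multiplication by a unit multiple of $p^r$'' and that the differentials are forced.  This is not automatic: the degrees of $H_{2ni}(F_{k})\to H_{2ni}(F_{k+1})$ grow as $p^{r+d}$ with $i$, and the fact that $H^{2np^{k+1}}(F_{k+1})\to H^{2np^{k+1}}(F_k)$ is not onto --- which is what keeps $F$ torsion-free and makes (a) and (d) come out with the stated orders $ip^{r}$ and $ip^{r-1}$ --- is proved in the paper by a Hopf-invariant-one contradiction (via $H_{p^k}\colon\Omega S^{2n+1}\to\Omega S^{2np^k+1}$ and divisibility of the Whitehead element $\omega_{np^k}$).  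Without some such input the spectral sequence is not forced, and the arithmetic in (d)/(f) cannot be read off.
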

\begin{TheoremB*}
	If $n>1$, $R\simeq (C\rtimes T)\vee W$ and the
	composition:
	\[
	C\rtimes T\rightarrow R\xrightarrow{\rho} G
	\]
	is homotopic to the composition:
	\[
	C\rtimes T\xrightarrow{1\rtimes g} C\rtimes \Omega
	G\xrightarrow{\omega}C\vee
	G\xrightarrow{c\vee1}G
	\]
	where $\omega$ is the Whitehead product map which
	lies in the fibration sequence:
	\[
	C\rtimes \Omega G\xrightarrow{\omega}C\vee G\xrightarrow{\pi_2}G
	\]
	Furthermore, $\Omega G\simeq \Omega G\times \Omega(C\rtimes\Omega D)$.
\end{TheoremB*}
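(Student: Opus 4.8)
The plan is to realise $R$ as a homotopy pullback, read off a half-smash fibration over $W$, and match it with the Whitehead product fibration of the pinch map $(C\rtimes T)\vee W\to W$. In the diagram of Theorem~A(c) the lower square has both columns fibrations with fibre $T$ and the induced map on fibres the identity, so it is a homotopy pullback; hence $R\simeq G\times_D W$, the fibre inclusion $j'\colon\operatorname{hofib}(q')\to R$ of $q'\colon R\to W$ is the pullback of that of $q\colon G\to D$, and $\rho j'\simeq\iota$, $q'j'\simeq *$, where $\iota\colon\operatorname{hofib}(q)\to G$ is the fibre inclusion. Since $G$ is a co-$H$ space, the cofibration $C\xrightarrow{c}G\xrightarrow{q}D$ produces a half-smash fibration $C\rtimes\Omega D\xrightarrow{\iota}G\xrightarrow{q}D$ --- apply the Whitehead fibration of the pinch map $G\vee D\to D$ to the coaction $G\xrightarrow{\nu}G\vee G\xrightarrow{1\vee q}G\vee D$ and use that Whitehead products in $G$ vanish (cf.\ \cite{GT}) --- so $\operatorname{hofib}(q)\simeq C\rtimes\Omega D$, with $\iota$ compatible with the map $\omega$. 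Because $i'$ is null, the connecting map $\Omega D\to T$ of $T\xrightarrow{i'}W\to D$ has a section $s$ and $\Omega D\simeq T\times\Omega W$; combining this with $A\rtimes(X\times Y)\cong(A\rtimes X)\rtimes Y$ turns the previous display into
\[
(C\rtimes T)\rtimes\Omega W\xrightarrow{\ j'\ }R\xrightarrow{\ q'\ }W,
\]
a fibration of the same shape as the Whitehead product fibration $(C\rtimes T)\rtimes\Omega W\to(C\rtimes T)\vee W\to W$.

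Next I would identify the map and the composite. Set $\Phi=(c\vee1)\circ\omega\circ(1\rtimes g)\colon C\rtimes T\to G$, where $c\vee1$ denotes the map $C\vee G\to G$ restricting to $c$ and to $1_G$. As $\pi_2\omega\simeq *$ and $q\circ(c\vee1)\simeq q\circ\pi_2$ (because $qc\simeq *$ and a map out of $C\vee G$ is determined by its restrictions to $C$ and $G$), we get $q\Phi\simeq *$; hence $\Phi$ lifts through $\iota$ to a map $\Phi'\colon C\rtimes T\to\operatorname{hofib}(q)$, and the compatibility of $\iota$ with $\omega$ lets one check $\Phi'\simeq 1\rtimes s$ under $\operatorname{hofib}(q)\simeq(C\rtimes T)\rtimes\Omega W$. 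Put $\mu=j'\Phi'\colon C\rtimes T\to R$; then $\rho\mu\simeq\rho j'\Phi'\simeq\iota\Phi'\simeq\Phi$ and $q'\mu\simeq q'j'\Phi'\simeq *$.

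For the splitting I would use that $R\in\W_r^\infty$ is a wedge of Moore spaces, hence a co-$H$ space, and that $s$ --- therefore $\mu$ --- is split monic on $\bmod\ p$ homology. Comparing the Serre spectral sequences of the two fibrations above (the base $W$ is a wedge of Moore spaces, and the hypothesis $n>1$ makes the cell dimensions sparse enough that the only transgressions available are those cancelling the fibre's $\Omega W$ against the base $W$) gives $\ker q'_*=\operatorname{im}\mu_*$ with $q'_*$ onto, so $q'$ exhibits $W$ as the cofibre of $\mu$. Choosing a homology left inverse $r$ of $\mu$ (possible since $R$ is a wedge of Moore spaces), the composite
\[
R\xrightarrow{\ \mathrm{comult}\ }R\vee R\xrightarrow{\ r\vee q'\ }(C\rtimes T)\vee W
\]
is a $\bmod\ p$ homology isomorphism of simply connected spaces, hence a homotopy equivalence carrying the wedge summand $C\rtimes T$ onto $\mu$; thus $R\simeq(C\rtimes T)\vee W$, and under this the composite $C\rtimes T\to R\xrightarrow{\rho}G$ is $\rho\mu\simeq\Phi$, which is the second assertion.

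Finally, looping $R\simeq(C\rtimes T)\vee W$ together with the Whitehead fibration of $(C\rtimes T)\vee W\to W$ gives $\Omega R\simeq\Omega W\times\Omega((C\rtimes T)\rtimes\Omega W)\simeq\Omega W\times\Omega(C\rtimes\Omega D)$; feeding this into $\Omega G\simeq T\times\Omega R$ (from $i$ null) and $\Omega D\simeq T\times\Omega W$ yields $\Omega G\simeq\Omega D\times\Omega(C\rtimes\Omega D)$, the last assertion. The main obstacle is the co-$H$ half-smash fibration of the first paragraph --- establishing $C\rtimes\Omega D\simeq\operatorname{hofib}(q\colon G\to D)$ with fibre inclusion compatible with $\omega$, which is exactly what legitimises the identification $\Phi'\simeq 1\rtimes s$ --- together with the transgression bookkeeping in the splitting step; the remainder is formal manipulation of fibrations, half-smash products, and the null-homotopies of $i$ and $i'$.
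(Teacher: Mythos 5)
Your overall architecture is backwards relative to the paper, and the reversal creates a real gap. In the paper the logical order is: (i) show the homology sequence $0\to\widetilde H_*(C\rtimes T)\to\widetilde H_*(R)\to\widetilde H_*(W)\to 0$ splits, which yields a right homotopy inverse of $R\to W$ by \cite[Lemma A3]{AG}; (ii) loop the right inverse to get a section of $\Omega G\to\Omega D$; (iii) only then invoke \cite[Corollary A7]{AG} to conclude that the fibre of $G\to D$ is $C\rtimes\Omega D$. You instead try to produce the half-smash fibration $C\rtimes\Omega D\to G\to D$ \emph{first}, from the co-$H$ structure on $G$ together with an assertion that ``Whitehead products in $G$ vanish.'' Neither part of that is supplied by the paper or is standard: a co-$H$ structure on $G$ does not by itself give the needed section of $\Omega G\to\Omega D$ (which is what drives \cite[Corollary A7]{AG}), and there is no reason Whitehead products in a co-$H$ space (even a suspension retract) should vanish. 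Since your identification of $\operatorname{hofib}(q)$ with $C\rtimes\Omega D$ and the compatibility of the fibre inclusion with $\omega$ are the foundation for the rest of your argument (the lift $\Phi'$, the identification $\Phi'\simeq 1\rtimes s$, the comparison with the pinch-map fibration of $(C\rtimes T)\vee W\to W$), this gap is structural.

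A second, independent gap is in the splitting step. You write that a homology left inverse of $\mu$ exists ``since $R$ is a wedge of Moore spaces,'' but that is not automatic: being a wedge of Moore spaces guarantees nothing about a given monomorphism splitting unless the torsion orders cooperate. This is precisely the content of the paper's Lemma~3.2: one must show $kp^r\widetilde H_{2nk-1}(R)=0$ (and $kp^{r-1}$ when $k=p^s$), which forces the extension to split because the orders in $\widetilde H_*(W)$ computed in 2.2(f) already saturate those bounds. The paper gets these bounds by using \cite[Theorem C]{G4} to realize $R$ as a retract of $\Sigma T\wedge T$ and then reading off orders from the explicit decomposition of $\Sigma T\wedge T$ into Moore spaces. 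Your ``transgression bookkeeping'' remark gestures at this but does not replace it; the spectral sequence comparison you sketch shows $H_*(W)$ is a quotient of $H_*(R)$ but does not show the sequence splits without the order computation. You should carry out the order bound explicitly (Lemma~3.2) before appealing to \cite[Lemma A3]{AG}, and you should derive the fibration $C\rtimes\Omega D\to G\to D$ from the section of $\Omega G\to\Omega D$ as in the paper's Proposition~3.4, rather than attempting to establish it first.

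Your final looping manipulations and the identification of $\rho|_{C\rtimes T}$ with $(c\vee 1)\circ\omega\circ(1\rtimes g)$ are in the right spirit and not far from the paper's Proposition~3.5 (which verifies the identification directly on the cellular pieces $C\times T$ and $\ast\times C^*(T)$), but they currently rest on the two unestablished ingredients above.
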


	In addition, some partial results are
	obtained for $\rho|W$, but much is still unknown.

This paper is organized as follows. In
section~1 we revisit some constructions in~\cite{GT} and sharpen some of
the results. In 
section~2 we embark on a multifaceted
induction, constructing the space $W$ via a
sequence of approximations and prove Theorem~A. Section~3 is devoted
to proving Theorem~B.

\section{}
In the course of the constructions in \cite{GT}, $G$ was
constructed inductively as the union of spaces $G_k$ where
\[
G_k=G_{k-1}\cup CP^{2np^k}\left(p^{r+k}\right)
\]
$G_k$ was constructed as a retract of $\Sigma T^{2np^k}\,$, the
suspension of the $2np^k$ skeleton of~$T$. We need to
make a refinement of this construction. In the proof
of~\textup{4.3(d)} a map
\[
e\colon P^{2np^k}\left(p^{r+k-1}\right)\vee
P^{2np^{k}+1}\left(p^{r+k-1}\right)\rightarrow \Sigma T^{2np^k}
\]
was constructed with the sole property that it induced
an epimorphism in $\bmod\  p$ homology in dimensions
$2np^k$ and $2np^k+1$. The components of $e$ were given as compositions:
\begin{align*}
&	P^{2np^k}\left(p^{r+k-1}\right)\rightarrow
	\Sigma\left(T^{2np^{k-1}-1}\times T^{2np^{k-1}}\times\dots\times
	T^{2np^{k-1}}\right)\xrightarrow{\Sigma\tilde{\mu}'}\Sigma
	T^{2np^k}\\
&	P^{2np^k+1}\left(p^{r+k-1}\right)\rightarrow
	\Sigma\left(T^{2np^{k-1}}\times\dots\times
	T^{2np^{k-1}}\right)\xrightarrow{\Sigma\tilde{\mu}}\Sigma
	T^{2np^k}
\end{align*}
where the middle space in each case is
the suspension of a product of $p$ factors and lies in
$\W_r^{r+k-1}$ and the maps $\tilde{\mu}$ and $\tilde{\mu}'$ are obtained
from the
action given in 4.3(n) for the case $k-1$.
\begin{proposition}\label{prop1.1}
	There is a choice of a map $e$ which
	is a $\bmod\  p$ homology epimorphism in dimensions $2np^k$
	and $2np^k+1$ and such that the diagram:
\[
\begin{CD}
P^{2np^k}\left(p^{r+k-1}\right)\vee
	P^{2np^k+1}\left(p^{r+k-1}\right)@>{e}>> \Sigma T^{2np^k}\\
	@VVV@VVV\\
	\Sigma(T\wedge T)@>H(\mu)>>\Sigma T
\end{CD}
\]
homotopy commutes where $H(\mu)$ is the Hopf construction
on the multiplication $\mu\colon T\times T\rightarrow T$.
\end{proposition}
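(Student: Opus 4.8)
The plan is to keep the two composites of \cite{GT} that make up $e$, and, after composing with the skeletal inclusion $\Sigma T^{2np^k}\hookrightarrow\Sigma T$, to re-route each through $\Sigma(T\wedge T)$ and $H(\mu)$ by peeling one factor off the $p$-fold product and invoking the naturality of the Hopf construction. The structural input is that the operations $\tilde{\mu},\tilde{\mu}'$ of 4.3(n) are built iteratively from $\mu\colon T\times T\to T$, so that, writing $T_k=T^{2np^k}$ for short, one has $\tilde{\mu}=\mu\circ(\tilde{\mu}_{p-1}\times\iota)$ and $\tilde{\mu}'=\mu\circ(\tilde{\mu}'_{p-1}\times\iota)$, where $\iota\colon T_{k-1}\hookrightarrow T$ is the skeletal inclusion of the last factor and $\tilde{\mu}_{p-1},\tilde{\mu}'_{p-1}$ are the corresponding $(p-1)$-fold operations.

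First I would record two standard facts. (i) Naturality of the Hopf construction: for $\nu=\mu\circ(f\times g)\colon A\times B\to T$, the map $H(\nu)\colon A*B\to\Sigma T$ is homotopic to $A*B\xrightarrow{f*g}T*T\xrightarrow{H(\mu)}\Sigma T$, where $f*g$ is the join and $T*T\simeq\Sigma(T\wedge T)$. (ii) For any based $\nu\colon A\times B\to T$, the restriction of $\Sigma\nu$ to the top wedge summand $\Sigma(A\wedge B)$ of the Milnor splitting $\Sigma(A\times B)\simeq\Sigma A\vee\Sigma B\vee\Sigma(A\wedge B)$ is homotopic (up to a sign, immaterial here) to $H(\nu)$, because the correction terms distinguishing $\Sigma\nu$ from $H(\nu)$ factor through the retraction $\Sigma(A\times B)\to\Sigma A\vee\Sigma B$, which annihilates the top summand. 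Applying (ii) with $A=T_{k-1}^{\times(p-1)}$, $B=T_{k-1}$, $\nu=\tilde{\mu}$, and then (i) with $\tilde{\mu}=\mu\circ(\tilde{\mu}_{p-1}\times\iota)$, one gets that $\Sigma\tilde{\mu}$ on the top summand factors as $\Sigma\bigl(T_{k-1}^{\wedge(p-1)}\wedge T_{k-1}\bigr)\to\Sigma(T\wedge T)\xrightarrow{H(\mu)}\Sigma T$; likewise for $\tilde{\mu}'$.

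Next I would show that the auxiliary maps from the Moore spaces $P^{2np^k}(p^{r+k-1})$ and $P^{2np^k+1}(p^{r+k-1})$ into the suspensions of the $p$-fold products, as built in the proof of 4.3(d), may be chosen to factor through the top Milnor summand. This is a dimension count: those $p$-fold products have dimension exactly $2np^k$, with top cell the suspension of the product of the top cells of the factors, while the lower Milnor summands contribute cells only in dimensions $\leqslant 2np^k$ coming from proper sub-products; the cellular map can be pushed off the lower summands after at most one correction on the top cell, and that correction is available precisely because $p^{r+k-1}$ times the bottom-cell class already vanishes, which is what made the original maps exist. Composing, I obtain a map $b\colon P^{2np^k}(p^{r+k-1})\vee P^{2np^k+1}(p^{r+k-1})\to\Sigma(T\wedge T)$, and I define $e$ to be a cellular approximation of $H(\mu)\circ b$. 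Since the source has dimension $2np^k+1$ and $\Sigma T^{2np^k}$ is exactly the $(2np^k+1)$-skeleton of $\Sigma T$, this $e$ lands in $\Sigma T^{2np^k}$ and the square of the proposition commutes by construction. Finally $e$ is a $\bmod\ p$ homology epimorphism in dimensions $2np^k$ and $2np^k+1$: the Hopf construction realizes the reduced Pontryagin product, $H(\mu)_{*}\bigl(\sigma(a\otimes a')\bigr)=\sigma(a\cdot a')$, and the top cells of $P^{2np^k+1}(p^{r+k-1})$ and $P^{2np^k}(p^{r+k-1})$ map under $b$ to $\sigma\bigl(y^{p^{k-1}}\!\otimes y^{p^k-p^{k-1}}\bigr)$ and $\sigma\bigl(xy^{p^{k-1}-1}\!\otimes y^{p^k-p^{k-1}}\bigr)$ in $\widetilde{H}_{*}\bigl(\Sigma(T\wedge T)\bigr)$, which $H(\mu)$ carries to the generators $\sigma(y^{p^k})$ and $\sigma(xy^{p^k-1})$ of $\widetilde{H}_{2np^k+1}(\Sigma T)$ and $\widetilde{H}_{2np^k}(\Sigma T)$.

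The main obstacle is the compression onto the top Milnor summand together with pinning down that $\tilde{\mu},\tilde{\mu}'$ are the expected iterates of $\mu$; both must be read off the recursive construction of the action of 4.3(n), and the compression requires keeping careful track of exactly which cells of the $p$-fold product the auxiliary maps of \cite{GT} detect. By contrast, the Milnor splitting, the naturality of $H(-)$, the identification of $\Sigma\nu$ with $H(\nu)$ on the top summand, the Pontryagin-product formula for $H(\mu)_{*}$, and the cellular compression into $\Sigma T^{2np^k}$ are all routine.
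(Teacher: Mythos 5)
Your argument follows essentially the same route as the paper's: factor $e$ through the two-factor product using the iterated structure of $\tilde\mu$, split off the top smash summand of the Milnor decomposition, and invoke naturality of the Hopf construction to exhibit the factorization through $H(\mu)$. The paper handles the compression step more cleanly by simply composing with the idempotent $e_3$ and noting that $(e_1)_*=(e_2)_*=0$ in dimensions $2np^k$ and $2np^k+1$ (the lower summands $\Sigma A$ and $\Sigma B$ have top dimension strictly below $2np^k$, so your bound ``$\leqslant 2np^k$'' should read ``$<2np^k$''), which makes the epimorphism claim immediate without the somewhat opaque ``one correction on the top cell'' step or the explicit Pontryagin-product verification.
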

\begin{proof}
	Since $\tilde{\mu}$ and $\tilde{\mu}'$ are obtained by iteration
	of the restriction of $\mu_{k-1}$ in 4.3(n):
	\[
	T^{2np^{k-1}}
\times
T^{2nmp^{k-1}}\rightarrow T^{2n(m+1)p^{k-1}}
	\]
for $1\leqslant m< p$, it follows that $e$ factors through
\[
\Sigma(\mu)\colon \Sigma\left(T^{2np^{k-1}}\times T^{2n(p-1)p^{k-1}}\right)\rightarrow \Sigma T^{2np^k}.
\]
Now the standard splitting
\begin{multline*}
\Sigma T^{2np^{k-1}}\vee \Sigma T^{2n(p-1)p^{k-1}}\vee
\Sigma\left(T^{2np^{k-1}}\wedge T^{2n(p-1)p^{k-1}}\right)\\
{} \rightarrow
\Sigma\left(T^{2np^{k-1}}\times T^{2n(p-1)p^{k-1}}\right)
\end{multline*}
is induced by the inclusions of the axes and the Hopf
construction on the identity map of the product. Let $e_1$, $e_2$, $e_3$
be the eidempotent self maps of
$\Sigma\left(T^{2np^{k-1}}\times T^{2n(p-1)p^{k-1}}\right)$ corresponding to
these three
retracts. Then in homology we have
\[
1=(e_1)_{\ast}+(e_2)_{\ast}+(e_3)_{\ast}.
\]
However, in dimensions $2np^k$ and $2np^k+1$,
$(e_1)_{\ast}=(e_2)_{\ast}=0$.
Consequently the composition:
\begin{align*}
P^{2np^k}\left(p^{r+k-1}\right)\vee P^{2np^k+1}\left(p^{r+k-1}\right)&
\rightarrow \Sigma\left(T^{2np^{k-1}}\times T^{2n(p-1)p^{k-1}}\right)\\
&\xrightarrow{e_3} \Sigma\left(T^{2np^{k-1}}\times T^{2n(p-1)p^{k-1}}\right)\\
&\rightarrow \Sigma T^{2np^k}
\end{align*}
is also an epimorphism in $\bmod\  p$ homology in
dimensions $2np^k$ and $2np^k+1$. However $e_3$ is the composition:
\begin{align*}
	\Sigma\left(T^{2np^{k-1}}\times T^{2n(p-1)p^{k-1}}\right)&\rightarrow
	\Sigma\left(T^{2np^{k-1}}\wedge T^{2n(p-1)p^{k-1}}\right)\\
	&\xrightarrow{H}\Sigma\left(T^{2np^{k-1}}\times T^{2n(p-1)p^{k-1}}\right)
\end{align*}
where $H$ is the Hopf construction on the identity. Thus
$\Sigma\tilde{\mu}\circ H$ is the Hopf construction on $\tilde{\mu}$:
\begin{align*}
\Sigma\left(T^{2np^{k-1}}\wedge
T^{2n(p-1)p^{k-1}}\right)&\xrightarrow{H}\Sigma\left(T^{2np^{k-1}}
\times T^{2n(p-1)p^{k-1}}\right) \xrightarrow{\Sigma\tilde{\mu}}\Sigma T^{2np^k}
\end{align*}
\end{proof}

We now apply these considerations to the induced
fibration determined by $\tilde{g}$:
\[
\begin{CD}
	T@= T\\
	@VVV@VVV\\
	Q@>>>R\\
	@V{\pi'}VV@VVV\\
	\Sigma T @>{\tilde{g}}>>G
\end{CD}
\]
The structure of the fibration $\pi'\colon Q\rightarrow \Sigma T$ is
completey determined by the action map:
\[
\Omega \Sigma T \times T\rightarrow T
\]
which is given by the composition
\[
\Omega\Sigma T\times T\xrightarrow{\Omega\tilde{g}\times 1}\Omega G\times
T\rightarrow T.
\]
This is the action described in the proof of 4.3(n), so
$Q\simeq \Sigma T\wedge T$ and $\pi'\sim H(\mu)$. We conclude
\begin{proposition}\label{prop1.2}
There is a lifting $\tilde{e}$ of $\tilde{g}e$ to $R$
\[
\xy
\xymatrix
{
P^{2np^k}\left(p^{r+k-1}\right)\vee P^{2np^{k}+1}\left(p^{r+k-1}\right)
\drto^{e}
\rto^{}
&\Sigma T\wedge T
\dto_{H(\mu)}
\rto^{}&R
\dto^{}\\
&\Sigma T\rto^{\tilde{g}}&G
}
\endxy
\]

	We will designate the components of $\tilde{g}e$ as
	\begin{align*}
		&a_k\colon P^{2np^k}\left(p^{n+k-1}\right)\rightarrow G_k\\
		&c_k\colon P^{2np^k+1}\left(p^{r+k-1}\right)\rightarrow G_k
	\end{align*}
	and their lifts to $R_k$ as $\widetilde{a}_k$ and $\widetilde{c}_k$
	respectively.
\end{proposition}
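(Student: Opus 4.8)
The plan is to obtain the lift formally, by combining Proposition~\ref{prop1.1} with the identification, recorded just above, of the pulled-back fibration $\pi'\colon Q\to\Sigma T$ with the Hopf construction $H(\mu)\colon\Sigma T\wedge T\to\Sigma T$. Write $P=P^{2np^k}(p^{r+k-1})\vee P^{2np^k+1}(p^{r+k-1})$ and let $j\colon\Sigma T^{2np^k}\hookrightarrow\Sigma T$ be the inclusion of the suspended skeleton, so that $\tilde{g}\,e$ means the composite $P\xrightarrow{e}\Sigma T^{2np^k}\xrightarrow{j}\Sigma T\xrightarrow{\tilde{g}}G$. Since $Q$ is by construction the pullback of $\rho$ along $\tilde{g}$, a lift of $\tilde{g}\,e$ through $\rho$ amounts to a lift of $j\circ e$ through $\pi'$; composing such a lift with the natural map $Q\to R$ will then produce $\tilde{e}$.

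To lift $j\circ e$ through $\pi'$, I would invoke Proposition~\ref{prop1.1}: it supplies a map $\bar{e}\colon P\to\Sigma(T\wedge T)=\Sigma T\wedge T$ together with a homotopy $H(\mu)\circ\bar{e}\simeq j\circ e$. Choosing an equivalence $\theta\colon\Sigma T\wedge T\xrightarrow{\ \simeq\ }Q$ with $\pi'\circ\theta\simeq H(\mu)$, I would set $\tilde{e}$ equal to the composite $P\xrightarrow{\bar{e}}\Sigma T\wedge T\xrightarrow{\theta}Q\to R$. Then $\rho\circ\tilde{e}\simeq\tilde{g}\circ\pi'\circ\theta\circ\bar{e}\simeq\tilde{g}\circ H(\mu)\circ\bar{e}\simeq\tilde{g}\,e$, which is precisely the homotopy commutativity of the displayed diagram. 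One may note in passing, though it is not part of the statement, that $\tilde{g}\,e$ is still a $\bmod p$ homology epimorphism in dimensions $2np^k$ and $2np^k+1$, because $e$ is and $\tilde{g}_{\ast}$ is split epic, $f$ being a right homotopy inverse of $\tilde{g}$.

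Finally I would identify the components and their skeletal provenance. Restricting $\tilde{g}\,e$ to the two wedge summands, and using from \cite{GT} that $e$ already factors through $\Sigma T^{2np^k}$ and that $\tilde{g}$ restricted to $\Sigma T^{2np^k}$ factors through the retract $G_k\subseteq G$, one obtains $a_k\colon P^{2np^k}(p^{r+k-1})\to G_k$ and $c_k\colon P^{2np^k+1}(p^{r+k-1})\to G_k$. For the lifts, the proof of Proposition~\ref{prop1.1} exhibits $\bar{e}$ as factoring through $\Sigma\bigl(T^{2np^{k-1}}\wedge T^{2n(p-1)p^{k-1}}\bigr)$, the suspension of a skeleton; under $\theta$ this lands in the part of $Q$ lying over $\Sigma T^{2np^k}$, and pushing through $Q\to R$ shows that $\tilde{e}$ lands in the corresponding approximation $R_k$ over $G_k$, its components being the desired $\tilde{a}_k$ and $\tilde{c}_k$. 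I expect the only step needing genuine care to be this last one, namely checking compatibility with the skeletal filtrations $\{G_k\}$ and $\{R_k\}$ of \cite{GT}; everything else is a purely formal consequence of the pullback description of $Q$ together with Proposition~\ref{prop1.1}.
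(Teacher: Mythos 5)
Your proposal is correct and follows essentially the same route as the paper. The paper treats Proposition~\ref{prop1.2} as an immediate consequence of the preceding paragraph: $Q$ is the pullback of $\rho$ along $\tilde{g}$, the clutching action is $\Omega\Sigma T\times T\xrightarrow{\Omega\tilde{g}\times1}\Omega G\times T\to T$ as in 4.3(n) of \cite{GT}, whence $Q\simeq\Sigma T\wedge T$ with $\pi'\sim H(\mu)$, and then Proposition~\ref{prop1.1} supplies the lift. That is exactly your argument. The only place you go beyond the paper is the last paragraph, where you argue that the lift actually lands in $R_k$ (and the components in $G_k$); the paper simply asserts this notation without comment, and your remark that the skeletal filtration compatibility is the one point needing care is a fair observation, correctly resolved via the factorization of $\bar{e}$ through $\Sigma\bigl(T^{2np^{k-1}}\wedge T^{2n(p-1)p^{k-1}}\bigr)$ and the fact that $G_k$ is a retract of $\Sigma T^{2np^k}$.
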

\begin{proposition}\label{prop1.3}
	There is a homotopy commutative
	diagram of cofibrations sequences:
	\[
	\begin{CD}
		P^{2np^k}\!\!\left(p^{r+k}\right)\!
		@>{p^{r+k-1}}>>
		P^{2np^k}
		\!\!\left(p^{r+k}\right)\!
		@>{d}>>
P^{2np^k}\!\!\left(p^{r+k-1}\right)\!
\!{}\vee{}\! P^{2np^k+1}\!\!\left(p^{r+k-1}\right)\!
\\
		@|
		@VV{\beta_k}V
		@VV{a_k\vee c_k}V
		\\
		P^{2np^k}\!\!\left(p^{r+k}\right)\!
		@>{\alpha_k}>>
		G_{k-1}
		@>>>
		G_k
	\end{CD}
	\]
\end{proposition}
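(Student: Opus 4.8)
The plan is to build the diagram column by column, starting from the right. The space $G_k$ is defined in the paper as $G_{k-1}\cup CP^{2np^k}(p^{r+k})$, so there is by construction a cofibration sequence $P^{2np^k}(p^{r+k})\xrightarrow{\alpha_k}G_{k-1}\to G_k$; this is the bottom row, and the attaching map $\alpha_k$ is what we must identify. The right-hand vertical map $a_k\vee c_k$ is supplied by Proposition~\ref{prop1.2}: its components $a_k$ and $c_k$ are the components of $\tilde g e$, and by construction of $e$ in Proposition~\ref{prop1.1} the map $a_k\vee c_k$ is a $\bmod\ p$ homology epimorphism onto the bottom two cells of $G_k$ in dimensions $2np^k$ and $2np^k+1$. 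The map $d\colon P^{2np^k}(p^{r+k})\to P^{2np^k}(p^{r+k-1})\vee P^{2np^k+1}(p^{r+k-1})$ in the top row is the standard pinch-type map arising from the cofibration $P^{2np^k}(p^{r+k})\xrightarrow{p^{r+k-1}}P^{2np^k}(p^{r+k})\to P^{2np^k}(p^{r+k-1})\vee P^{2np^k+1}(p^{r+k-1})$, which expresses the reduction of a $\bmod\ p^{r+k}$ Moore space mod its $\bmod\ p^{r+k-1}$ ``core'': indeed multiplication by $p^{r+k-1}$ on $P^{2np^k}(p^{r+k})$ has cofiber $P^{2np^k}(p^{r+k-1})\vee P^{2np^k+1}(p^{r+k-1})$, and this identifies the entire top row.

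Next I would define $\beta_k$ so that the right-hand square commutes. Since $d\colon P^{2np^k}(p^{r+k})\to P^{2np^k}(p^{r+k-1})\vee P^{2np^k+1}(p^{r+k-1})$ induces an isomorphism on $\bmod\ p$ homology in degrees $2np^k$ and $2np^k+1$, and $a_k\vee c_k$ is a $\bmod\ p$ homology epimorphism onto the bottom cells of $G_k$ lying in $G_{k-1}$'s top range, the composite $(a_k\vee c_k)\circ d$ lands (after a small cellular-approximation argument) in $G_{k-1}$, defining $\beta_k$; alternatively one invokes the lift $\tilde e$ of Proposition~\ref{prop1.2} together with the fact that $R_{k-1}\to R_k$ and $G_{k-1}\to G_k$ are compatible, and pushes $\tilde g\tilde e$ down one filtration stage. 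Commutativity of the right square is then built in. Finally, the left square and the induced map on cofibers: the cofiber of $d$ is $P^{2np^k}(p^{r+k})$ by the top cofibration sequence, and the cofiber of $a_k\vee c_k\colon P^{2np^k}(p^{r+k-1})\vee P^{2np^k+1}(p^{r+k-1})\to G_k$ is the two-stage extension which, by the defining cofibration for $G_k$, is $\Sigma^{-1}$ of something equivalent to a mapping cone whose comparison with $G_{k-1}$ produces the attaching map $\alpha_k$ and the identity $P^{2np^k}(p^{r+k})=P^{2np^k}(p^{r+k})$ on the left. The map $\alpha_k$ is then by construction the attaching map of the top cell of $G_k$, as required.

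The main obstacle is the identification of $\beta_k$ and the verification that the right-hand square \emph{actually} commutes (not merely commutes in homology): one must ensure that the chosen null-homotopies and the choice of $e$ from Proposition~\ref{prop1.1}—which was arranged precisely so that $e$ factors compatibly through $\Sigma(\mu)$ and the Hopf construction $H(\mu)$—are used to rigidify the square. This is where Proposition~\ref{prop1.1} is essential: without the compatibility of $e$ with $H(\mu)$ there would be no reason for $(a_k\vee c_k)\circ d$ to factor through $G_{k-1}$ on the nose. Once that factorization is pinned down, extending to the full $3\times3$ diagram of cofibration sequences is formal, using the standard fact that a commuting square of cofibrations induces a cofibration sequence on vertical cofibers, together with a comparison of the two-cell complexes in the bottom range of $G_k$.
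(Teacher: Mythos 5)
Your proposal has a genuine gap at the one step that actually carries the content of the proposition: the existence of the lift $\beta_k$. You suggest that $(a_k\vee c_k)\circ d$ ``lands\dots in $G_{k-1}$'' by a cellular-approximation argument, but this cannot work as stated. The source $P^{2np^k}(p^{r+k})$ has top cell in dimension $2np^k$, while $G_{k-1}$ is only the $(2np^k-1)$-skeleton of $G_k$ (recall $G_k=G_{k-1}\cup CP^{2np^k}(p^{r+k})$, so the cone contributes cells in dimensions $2np^k$ and $2np^k+1$). Cellular approximation therefore only pushes the map into the $2np^k$-skeleton of $G_k$, which is strictly larger than $G_{k-1}$, and gives no factorization. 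Your alternative suggestion -- pushing $\tilde{g}\tilde{e}$ ``down one filtration stage'' using $R_{k-1}\to R_k$ -- is also not adequate: that is essentially the content of Proposition~\ref{prop1.4}, which in the paper is proved \emph{after} and \emph{using} Proposition~\ref{prop1.3}, so one would be arguing in a circle.

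The paper's actual mechanism is a specific technical tool you do not mention: the homotopy fiber of the pinch map $G_k\to P^{2np^k+1}(p^{r+k})$ onto the top Moore space is identified (via~\cite{G1}) as the relative James construction $(G_{k-1},P^{2np^k}(p^{r+k}))_\infty$, which coincides with $G_{k-1}$ except for cells in dimensions $\geqslant 2np^k+2n-1$. One lifts $(a_k\vee c_k)\circ d$ into this fiber, and then, because the source has dimension $2np^k<2np^k+2n-1$, the lift factors through $G_{k-1}$ itself; this is what produces $\beta_k$. Finally, you claim Proposition~\ref{prop1.1} (compatibility of $e$ with $H(\mu)$) is ``essential'' to rigidify the square, but the paper's proof of Proposition~\ref{prop1.3} does not invoke that compatibility at all; Proposition~\ref{prop1.1} feeds into Proposition~\ref{prop1.2}, not into this argument. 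The closing sentence of your proposal about the cofiber of $a_k\vee c_k$ being ``$\Sigma^{-1}$ of something equivalent to a mapping cone'' is too vague to stand as a step in a proof.
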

\begin{proof}
	The homotopy fiber of the projection $G_k\rightarrow P^{2np^{k+1}}(p^{r+k})$
	is the relative James construction $(G_{k-1}, P^{2np^k}(p^{r+k}))_{\infty}$ 
	(see~\cite{G1}) which is $G_{k-1}+{}$cells of dimension${}\geqslant 2np^k+2n-1$.
	Thus the map $\beta_k$ exists. The next term
	is the James construction $(P^{2np^k+1}(p^{r+k}))_{\infty}$. The
	upper cofibration sequence is completely determined by
	the fact that the composition
	\[
	\begin{CD}
		P^{2np^k}\left(p^{r+k-1}\right)\vee P^{2np^k+1}\left(p^{r+k-1}\right)\xrightarrow{a_k\vee c_k} G_k\rightarrow P^{2np^{k+1}}\left(p^{r+k}\right)
	\end{CD}
	\]
	is an epimorphism in homology.
\end{proof}
\begin{proposition}\label{prop1.4}
	There is a unique lifting $\widetilde{\beta}_k$ of $\beta_k$ to
	$R_{k-1}$:
\[
\xy
\xymatrix
{
&R_{k-1}\ar[dd]\\
P^{2np^k}\left(p^{r+k}\right)
\ar[ur]^{{\widetilde{\beta}_k}}
\ar[dr]^{{\beta_k}}&\\
&G_{k-1}
}
\endxy
\]
\end{proposition}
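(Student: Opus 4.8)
The plan is to treat this as a lifting problem against the fibration $\rho_{k-1}\colon R_{k-1}\to G_{k-1}$, whose fibre is $T$, and to exploit two structural facts from \cite{GT}: that $T$ has no homology below dimension $2n-1$, and that $R=\bigcup_k R_k\in\W_r^\infty$ with $R_k$ obtained from $R_{k-1}$ by attaching cells starting in dimension $2np^k$. Since $P^{2np^k}(p^{r+k})$ is a suspension of dimension $2np^k$, the set $[P^{2np^k}(p^{r+k}),-]$ is an abelian group throughout, and composition acts by homomorphisms. I would prove existence first.

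For existence, the first step is to lift $\beta_k$ after composing it into $G_k$. By Proposition~\ref{prop1.3} the composite $P^{2np^k}(p^{r+k})\xrightarrow{\beta_k}G_{k-1}\xrightarrow{j_k}G_k$ is homotopic to $(a_k\vee c_k)\circ d$, and by Proposition~\ref{prop1.2} the map $a_k\vee c_k$ lifts through $\widetilde a_k\vee\widetilde c_k$ to $R_k$; so $j_k\beta_k$ lifts to some $\bar\beta\colon P^{2np^k}(p^{r+k})\to R_k$. The construction of the fibration $R_k\to G_k$ over the cone $CP^{2np^k}(p^{r+k})$ furnishes in particular a lift $\widehat\alpha_k\colon P^{2np^k}(p^{r+k})\to R_{k-1}$ of the attaching map $\alpha_k$, and setting $R_{k-1}^{+}=R_{k-1}\cup_{\widehat\alpha_k}CP^{2np^k}(p^{r+k})\subseteq R_k$ one gets $R_k/R_{k-1}^{+}\simeq\Sigma\bigl(P^{2np^k}(p^{r+k})\wedge T\bigr)$, which is $(2np^k+2n-2)$-connected. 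Since $\dim P^{2np^k}(p^{r+k})=2np^k<2np^k+2n-1$, cellular approximation moves $\bar\beta$ into $R_{k-1}^{+}$; and since $R_{k-1}\hookrightarrow R_{k-1}^{+}$ is $2np^k$-connected with cofibre $P^{2np^k+1}(p^{r+k})$, the map descends once more to $\bar\beta'\colon P^{2np^k}(p^{r+k})\to R_{k-1}$, with $j_k\rho_{k-1}\bar\beta'\simeq j_k\beta_k$. It remains to correct $\bar\beta'$ so that it covers $\beta_k$ itself. The class $\rho_{k-1}\bar\beta'-\beta_k$ lies in the kernel of $j_{k*}\colon[P^{2np^k}(p^{r+k}),G_{k-1}]\to[P^{2np^k}(p^{r+k}),G_k]$, hence in the image of $[P^{2np^k}(p^{r+k}),F_k]$, where $F_k=\mathrm{hofib}(j_k)$; but $F_k$ is also $\mathrm{hofib}(R_{k-1}\to R_k)$, so the fibre inclusion $F_k\to G_{k-1}$ lifts to $R_{k-1}$, and subtracting the corresponding class from $\bar\beta'$ produces the required lift $\widetilde\beta_k$.

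For uniqueness, two lifts of $\beta_k$ differ by a class $\delta\in[P^{2np^k}(p^{r+k}),R_{k-1}]$ with $\rho_{k-1}\delta$ nullhomotopic, so $\delta$ lies in the image of the fibre inclusion $i_{k-1*}\colon[P^{2np^k}(p^{r+k}),T]\to[P^{2np^k}(p^{r+k}),R_{k-1}]$. As $h$ has the right homotopy inverse $g$, the map $i\colon T\to R$ is nullhomotopic, and the same holds for $i_{k-1}$ by the compatibility of the tower; hence $\delta=0$ and the lift is unique.

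The step I expect to be the main obstacle is the connectivity bookkeeping in the existence argument: pinning down precisely which cells are attached in passing from $R_{k-1}$ through $R_{k-1}^{+}$ to $R_k$, and the identification of the homotopy fibres of $R_{k-1}\to R_k$ and $G_{k-1}\to G_k$ used in the correction. Both hinge on the compatibility of the towers $\{R_k\}$ and $\{G_k\}$ with $\rho$ and on the low-dimensional homology of $T$ recorded in \cite{GT}; that same compatibility is what forces $i_{k-1}$, and not just $i$, to be nullhomotopic.
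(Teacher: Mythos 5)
Your uniqueness argument contains a genuine gap. You assert that because $i\colon T\to R$ is nullhomotopic, so is $i_{k-1}\colon T\to R_{k-1}$, ``by the compatibility of the tower.'' This does not follow: nullhomotopy of a map into a direct limit does not descend to the finite stages of the tower, and there is no reason to expect the right homotopy inverse $g\colon T\to\Omega G$ of $h$ to compress to $\Omega G_{k-1}$. In fact, the paper carefully avoids any such claim. Its argument shows only that the \emph{particular} composite $P^{2np^k}(p^{r+k})\to T\to R_{k-1}$ is null, by a local cohomological argument: any map $P^{2np^k}(p^{r+k})\to T$ is trivial in $\bmod\ p$ cohomology (because $H^{2np^k-1}(T;\Z/p)$ is decomposable and supports a nonzero $\bmod\ p^{r+k}$ Bockstein), so it factors through the skeleton $T^{2np^k-2}$, and by \cite[4.3(b)]{GT} therefore through $\Omega G_{k-1}$, which dies in $R_{k-1}$ since $\Omega G_{k-1}\to T\to R_{k-1}$ are consecutive maps of a fibration sequence. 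The delicacy of this argument is strong evidence that $i_{k-1}$ is not nullhomotopic, so your shortcut cannot be repaired simply; the cohomological step is the content you are missing.

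On existence, your argument is correct in outline but is an unnecessarily long unwinding of a one-line fact: the paper observes that $R_{k-1}$ is the pullback of $R_k\to G_k\leftarrow G_{k-1}$, so the pair $(\beta_k,\ (\widetilde a_k\vee\widetilde c_k)\circ d)$ of compatible maps (compatible by Proposition~\ref{prop1.3}) immediately induces the lift. Your detour through cellular approximation, the intermediate space $R_{k-1}^{+}$, and the correction via $\mathrm{hofib}(j_k)$ reproves the lifting criterion for a pullback of a fibration by hand. Moreover, you invoke a lift $\widehat\alpha_k$ of $\alpha_k$ to $R_{k-1}$ to build $R_{k-1}^{+}$; this is not among the facts available at this point in the paper (the existence of such a lift is essentially Theorem~\ref{2.2}(h), proved later), so you should either justify it from the construction of $R_k$ in \cite{GT} or avoid it. The cleaner path is to invoke the pullback directly, as the paper does.
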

\begin{proof}
The existance follows since $R_{k-1}$ is a
pullback:
\[
\begin{CD}
	R_{k-1}@>>> R_k\\
	@VVV@VVV\\
	G_{k-1}@>>> G_k
\end{CD}
\]
and $(\widetilde{a}_k\vee \widetilde{c}_k)d\colon P^{2np^k}(p^{r+k})\rightarrow R_k$ is a lifting
of the composition
\[
P^{2np^k}\left(p^{r+k}\right)\xrightarrow{\beta_k}G_{k-1}\rightarrow G_k
\]
by 1.3. To prove uniqueness, suppose we have two
liftings $\widetilde{\beta}_k$ and ${\widetilde{\beta}_k}'$. Their difference consequently
factors through $T$. But any map
\[
P^{2np^k}\left(p^{r+k}\right)\rightarrow T
\]
is necessarily trivial in $\bmod\  p$ cohmoology, for
$H^{2np^k-1}(T; \Z/p)$ is\linebreak[4]
 decomposable and the $\bmod\  p^{r+k}$
Bockstein is nontrivial in\linebreak[4]
 $H^{2np^k-1}(T; \Z/p)$.
It follows that the difference $\widetilde{\beta}_k-\widetilde{\beta}_k'$ factors
through $T^{2np^k-2}$ and hence though $\Omega G_{k-1}$ by~\cite[4.3(b)]{GT}. Consequently the difference is trivial in $R_{k-1}$.
\end{proof}

The following result is a special case of~\cite[2.3]{GT}
\begin{theorem}\label{theor1.5}
Suppose all spaces are localized at a
prime $p>2$, and in the diagram:
\[
\xy
\xymatrix
{
&F\dto{}\ar@{=}[r]&F\dto{}\\
S^{m-1}\rto{}&E_0\rto{}\dto{}&E\dto{}\\
S^{m-1}\uto^{p}\ar@{-}[r]&B\rto{}&B\cup e^m
}
\endxy
\]
the middle column is a pullback and the bottom row is a cofibration. Then
\[
\left[E,BW_n\right]\rightarrow \left[E_0,BW_n\right]
\]
is onto.
\end{theorem}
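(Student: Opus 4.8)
The plan is to derive this from \cite[2.3]{GT}, of which the displayed statement is the single-cell instance; so the first step is essentially a dictionary. In the diagram, $B\cup e^{m}$ is $B$ with one $m$-cell attached along a map $\phi\colon S^{m-1}\to B$ (the bottom row being the ensuing cofibre sequence); the commuting lower-left square records that $\phi=\psi\circ p$, where $p\colon S^{m-1}\to S^{m-1}$ has degree $p$ and $\psi$ carries a chosen lift $\widetilde\psi\colon S^{m-1}\to E_{0}$ through the fibration $E_{0}\to B$; and $E\to B\cup e^{m}$ is a fibration pulling back along $B\hookrightarrow B\cup e^{m}$ to $E_{0}\to B$, so that $E_{0}\hookrightarrow E$ is the restriction $E|_{B}$. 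Matching this data to the hypotheses of \cite[2.3]{GT} --- the attached co-$H$ piece being a single cell, the required ``$p$-divisibility'' witnessed by the degree-$p$ map together with $\widetilde\psi$ --- is all that is needed, whereupon surjectivity of $[E,BW_{n}]\to[E_{0},BW_{n}]$ is the conclusion of that result.

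For orientation, here is the mechanism behind \cite[2.3]{GT}. Since $e^{m}$ is contractible, $E$ is $E_{0}$ with a trivialised piece of the fibration glued on: $E\simeq E_{0}\cup_{S^{m-1}\times F}(D^{m}\times F)$, so that $E/E_{0}\simeq\Sigma^{m}F\vee S^{m}$. Extending a given $g\colon E_{0}\to BW_{n}$ over $E$ therefore comes down to nullifying the two components of the composite $\Sigma^{m-1}F\vee S^{m-1}\to E_{0}\xrightarrow{g}BW_{n}$: an obstruction in $[S^{m-1},BW_{n}]$ arising from a lift of $\phi$ to $E_{0}$, and one in $[S^{m-1}\wedge F,BW_{n}]\cong[\Sigma^{m-2}F,W_{n}]$. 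The degree-$p$ map in the lower-left square makes the first a multiple of $p$ (via $\widetilde\psi$, and because precomposition with $p$ is multiplication by $p$ on the homotopy set), and it feeds through to the second in the same way; the conclusion then rests on the vanishing property of $BW_{n}$ at the prime $p>2$ established in \cite[2.3]{GT}, namely that such $p$-multiples are already trivial on the complexes at hand.

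The delicate point, which I would expect to be the only real obstacle, is the last piece of bookkeeping: verifying that $\widetilde\psi$ and the degree-$p$ map enter exactly as the divisibility hypothesis of \cite[2.3]{GT} is framed --- in particular that the lift of $\phi$ coming from the cone description of $E$ may be compared with $\widetilde\psi\circ p$ --- and that ``localized at $p$, with $p>2$'' is precisely what the $W_{n}$-side of that result demands. Once the diagram is recognised as a faithful instance of those hypotheses, there is nothing further to prove.
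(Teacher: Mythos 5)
Your proposal takes exactly the paper's approach: the paper gives no argument beyond the single sentence preceding the statement, ``The following result is a special case of [GT, 2.3],'' and you correctly identify the same reduction, with the pullback middle column, the cofibration bottom row, and the degree-$p$ factorization $\phi=\psi\circ p$ through a lift $\widetilde\psi\colon S^{m-1}\to E_0$ matching the hypotheses of that result. The extra paragraph sketching the internal mechanism of [GT, 2.3] is reasonable orientation but goes beyond what the paper records, which consists only of the citation.
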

\section{}
In this section we will construct the fibration
\[
T\rightarrow W_k\rightarrow D_k
\]
and prove Theorem~A. The spaces $D_k$ were first
considered in	~\cite{A} and their relationship to $G_k$
was discussed in~\cite{AG}. We will construct them
directly from the ideas of~\cite{GT}.
We begin with
\[
C_k=\bigvee_{i=1}^k P^{2np^i+1}\left(p^{r+i-1}\right)
\]
and define $c\colon C_k\rightarrow G_k$ by $c\mid P^{2np^i+1}(p^{r+i-1})=c_i$.
Now define $D_k$ as the cofiber:
\[
C_k\xrightarrow{c}G_k\rightarrow D_k.
\]
Since $c_i\colon P^{2np^i+1}(p^{r+i-1})\rightarrow G_i\rightarrow G_k$ is an integral
homology mono\-morphism, we immediately have
\begin{proposition}\label{prop2.1}
	$H_i(D_k)=\begin{cases}
		Z/p^r&\mbox{if}\quad i=2n\\
		Z/p&\mbox{if}\quad i=2np^j\quad 1\leqslant j\leqslant k\\
		0&\mbox{otherwise}.
	\end{cases}$
\end{proposition}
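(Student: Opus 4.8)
The plan is to read off $H_*(D_k)$ from the homology long exact sequence of the cofibration $C_k\xrightarrow{c}G_k\to D_k$, the one substantive input being the fact already recorded above that each $c_i$, and hence $c$, is a monomorphism on integral homology. Injectivity of $c_*$ in every degree forces the long exact sequence to break into short exact sequences
\[
0\longrightarrow \tilde H_j(C_k)\xrightarrow{\ c_*\ }\tilde H_j(G_k)\longrightarrow \tilde H_j(D_k)\longrightarrow 0 ,
\]
so that $\tilde H_j(D_k)=\operatorname{coker}\bigl(c_*\colon \tilde H_j(C_k)\to \tilde H_j(G_k)\bigr)$, and it remains only to identify the source, the target, and the map between them.

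The homology of $C_k$ is immediate: it is a wedge of Moore spaces occupying pairwise distinct degrees, so $\tilde H_j(C_k)=\Z/p^{r+i-1}$ for $j=2np^i$ with $1\leqslant i\leqslant k$ and is zero otherwise, the $i$-th wedge summand accounting for the group in degree $2np^i$; since $c\mid P^{2np^i+1}(p^{r+i-1})=c_i$, the restriction of $c_*$ to this summand is exactly $c_{i*}$. For $\tilde H_*(G_k)$ I would cite the computation of \cite{GT}; alternatively it follows by induction on $k$ from the homology long exact sequence of the lower cofiber sequence $P^{2np^k}(p^{r+k})\xrightarrow{\alpha_k}G_{k-1}\to G_k$ of Proposition~\ref{prop1.3}, since $\tilde H_*\bigl(P^{2np^k}(p^{r+k})\bigr)$ sits in the single degree $2np^k-1$, which is odd and hence carries no homology of $G_{k-1}$, so that $\alpha_k$ induces zero on homology and $\tilde H_{2np^k}(G_k)\cong\Z/p^{r+k}$ while $\tilde H_j(G_k)\cong\tilde H_j(G_{k-1})$ for $j\ne 2np^k$. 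In all cases
\[
\tilde H_j(G_k)=\begin{cases}
\Z/p^r & j=2n\\
\Z/p^{r+i} & j=2np^i,\ 1\leqslant i\leqslant k\\
0 & \text{otherwise.}
\end{cases}
\]

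It remains to assemble the short exact sequences degree by degree. In degree $2n$ one has $\tilde H_{2n}(C_k)=0$, so $\tilde H_{2n}(D_k)\cong\Z/p^r$. In degree $2np^i$ with $1\leqslant i\leqslant k$ the map $c_*$ is the monomorphism $c_{i*}\colon\Z/p^{r+i-1}\hookrightarrow\Z/p^{r+i}$, whose image is the unique subgroup of index $p$, so $\tilde H_{2np^i}(D_k)\cong\Z/p$. In every other degree both $\tilde H_j(C_k)$ and $\tilde H_j(G_k)$ vanish, hence so does $\tilde H_j(D_k)$. The only step carrying real content is the identification of $\tilde H_*(G_k)$; once that is in hand, the proposition is the bookkeeping of a single long exact sequence, which is why it can be stated at once.
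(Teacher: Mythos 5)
Your proposal is correct and takes exactly the route the paper takes, which is to observe that $c_*$ is a monomorphism on integral homology, break the long exact sequence of the cofibration $C_k\to G_k\to D_k$ into short exact sequences, and read off the cokernel using the known homology of $C_k$ and $G_k$; the paper compresses all of this into the single word ``immediately,'' while you spell out the computation of $\tilde H_*(G_k)$ by induction on the cofibration $P^{2np^k}(p^{r+k})\xrightarrow{\alpha_k}G_{k-1}\to G_k$, which is a harmless and correct elaboration.
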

	By construction, the map $c$ lifts to a map
	\[
	C_k\xrightarrow{\widetilde{c}}R_k\rightarrow E_k
	\]
	where $E_k$ is described in \cite[4.3(h)]{GT} and we have
\[
\xy
\xymatrix
{
&E_k\rto^{} 
\dto{}&J_k\rto^{}
\dto_{\xi_k}&\dto{}F_k\\
(\ast)\qquad C_k
\urto^{\widetilde{c}}
\rto^{c}&G_k\dto{}\rto&D_k\dto{}\ar@{=}[r]&D_k\dto\\
&S^{2n+1}\left\{p^r\right\}\ar@{=}[r]&S^{2n+1}\left\{p^r\right\}\rto&S^{2n+1} 
}
\endxy
\]
where the diagram of vertical fibrations defines the
spaces $J_k$ and $F_k$. We are about to embark on a
multipart induction and wish to make one observation
first. Consider the Serre spectral sequence for the
homology of the fibration:
\[
\Omega S^{2n+1}\rightarrow F_k\rightarrow D_k
\]
where
\[
E^2_{p,q}=H_p\left(D_k;H_q\left(\Omega S^{2n+1}\right)\right)
\]
This is only nonzero when both $p$ and $q$ are
divisible by $2n$. Hence $E^2_{p,q}\cong E^{\infty}_{p,q}$ and $H_i(F_k)=0$
unless $i$ is divisible by $2n$. In particular
\begin{equation}
\tag{$\ast\ast$}
H_{2np^k}(F_k)\rightarrow H_{2np^k}(D_k)\ \mbox{is an epimorphism}.
\end{equation}
\begin{theorem}\label{2.2}
	Let $k\geqslant 0$. Then

	\textup{(a)}\hspace*{0.5em}$H_r(F_k)=\begin{cases}
		Z_{(p)}&\mbox{if}\quad $r=2ni$\\
		0&\mbox{otherwise}
	\end{cases}$\\
	and the homomorphism $H_{2ni}(F_{k-1})\rightarrow H_{2ni}(F_k)$
	has degree $p$ if $i\geqslant p^k$ and degree $1$ if $i<p^k$.

	\textup{(b)}\hspace*{0.5em}There is a map $\theta_i\colon
	P^{2ni}(p^r)\rightarrow J_k$ for each $i\geqslant p^k$
	such that the composition:
	\[
	P^{2ni}\left(p^r\right)\xrightarrow{\theta_i} J_k\rightarrow F_k
	\]
	induces an isomorphism in $H^{2ni}(\mbox{\ }; Z/p)$.

	\textup{(c)}\hspace*{0.5em}There is a map
	$\widetilde{\gamma}_k\colon J_k\rightarrow BW_n$ such that the
	composition:
	\[
	\Omega^2 S^{2n+1}\rightarrow
	J_k\xrightarrow{\widetilde{\gamma}_k}BW_n
	\]
	is homotopic to the map $\nu\colon \Omega^2 S^{2n+1}\rightarrow
	BW_n$ $($see~\cite{G2}$)$.

	\textup{(d)}\hspace*{0.5em}Let $W_k$ be the homotopy fiber of
	$\widetilde{\gamma}_k$. Then we have
	a homotopy commutative diagram of vertical fibration
	sequences:
	\[
	\begin{CD}
		T@=T@>>>\Omega S^{2n+1}\\
		@VVV @VVV @VVV\\
		R_k@>>>W_k@>>>F_k\\
		@VVV @VVV @VVV\\
		G_k@>>>D_k@= D_k 
	\end{CD}
	\]
	and two diagrams of fibration sequences:
	{\Small\[
	\begin{matrix}
		\begin{CD}
			S^{2n-1}@>>> \Omega^2S^{2n+1}@>{\nu}>> BW_n\\
			@VVV @VVV @|\\
			W_k@>>>J_k@>{\widetilde{\gamma}_k}>> BW_n\\
			@VVV @VVV\\
			F_k@= F_k
		\end{CD}&&&
		\begin{CD}
			S^{2n-1}@>>>T@>>>\Omega S^{2n+1}\\
			@VVV @VVV\\
			W_k@=W_k\\
			@VVV @VVV\\
			F_k@>>>D_k@>>>S^{2n+1}
		\end{CD}
	\end{matrix}
	\]}

	\textup{(e)}\hspace*{0.5em}$\Omega F_k\simeq S^{2n-1}\times \Omega
	W_k$

	\textup{(f)}\hspace*{0.5em}the homomorphism
	$H^r(F_k)\rightarrow H^r(W_k)$ is an epimorphism and
	\[
	H^m(W_k)=\begin{cases}
		Z_{(p)}/ip^{r-1}&\mbox{if}\quad m=2ni=2np^s\quad 0<s\leqslant
		k\\
		Z_{(p)}/ip^r&\mbox{if}\quad m=2ni > 2n\ \mbox{otherwise}\\
		0&\mbox{otherwise}
	\end{cases}
	\]

	\textup{(g)}\hspace*{0.5em}The image of the homorphism:
	\[
	H^{2np^{k+1}}(W_k)\rightarrow H^{2np^{k+1}}(T)
	\]
	has order $p$.

	\textup{(h)}\hspace*{0.5em}The map $\alpha_{k+1}$ lifts to a map
	$\widetilde{\alpha}_{k+1}\colon P^{2np^{k+1}}(p^{r+k+1})\rightarrow
	R_k$
	such that the composition:
	\[
	P^{2np^{k+1}}\left(p^{r+k+1}\right)\xrightarrow{\widetilde{\alpha}_{k+1}}
	R_k\rightarrow W_k
	\]
	is nonzero in integral cohomology.
\end{theorem}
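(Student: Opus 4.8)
We argue by a single multifaceted induction on $k$, in which the conclusions \textup{(a)--(h)} at stage $k-1$ --- and the spaces and maps they provide ($\widetilde{\gamma}_{k-1}$, $W_{k-1}$, the $\theta_i$, and $\widetilde{\alpha}_k$) --- feed into the construction at stage $k$. The base case $k=0$ is essentially already in \cite{GT}: there $C_0$ is a point, so $D_0=G_0$, $J_0=E_0$ and $W_0=R_0$, and $\widetilde{\gamma}_0$ is the classifying map of the fibration $T\to R_0\to E_0$ supplied there; parts \textup{(a)--(h)} for $k=0$ then follow from the computations of \cite{GT} together with the classical fibration $S^{2n-1}\to\Omega^2S^{2n+1}\xrightarrow{\nu}BW_n$ of \cite{G2}.

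For the inductive step I would treat the eight parts in the order listed. Part \textup{(a)} follows from the Serre spectral sequence of $\Omega S^{2n+1}\to F_k\to D_k$, which collapses as observed just before the theorem; to determine the group extensions and the degree statement one compares $F_k$ with $F_{k-1}$ along the cell structure $D_k\simeq D_{k-1}\cup e^{2np^k}\cup_p e^{2np^k+1}$ forced by Proposition \ref{prop2.1}, computing $H_\ast(F_k,F_{k-1})$ as the homology of the fibre over $(D_k,D_{k-1})$ and using the inductive hypothesis that $H_\ast(F_{k-1})$ is torsion free. The degree-$p$ attaching map of the top cell of $D_k$, together with $(\ast\ast)$, forces the relevant extension to be non-split, which makes $H_\ast(F_k)$ torsion free and yields the dichotomy (degree $p$ for $i\geqslant p^k$, degree $1$ for $i<p^k$). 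Part \textup{(b)} then builds maps out of Moore spaces detecting these classes: using the lifts $\widetilde{a}_k,\widetilde{c}_k$ of Proposition \ref{prop1.2} and the Hopf-construction maps of Propositions \ref{prop1.1}--\ref{prop1.2}, pushed into $J_k$ via $E_k\to J_k$ and with the Moore-space exponents adjusted to $p^r$, one obtains $\theta_i\colon P^{2ni}(p^r)\to J_k$ whose composites to $F_k$ detect the mod $p$ generator of $H_{2ni}(F_k)$, using $(\ast\ast)$ for $i=p^k$ and \textup{(a)} in general.

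The heart of the argument is part \textup{(c)}: extending $\nu$ from the fibre $\Omega^2S^{2n+1}$ of $J_k\to F_k$ to a map $\widetilde{\gamma}_k\colon J_k\to BW_n$. Since $\widetilde{\gamma}_{k-1}$ is available by induction and $J_k$ is obtained from $J_{k-1}$ by attaching the cells lying over the new cells of $D_k$ (twisted by the action of $\Omega S^{2n+1}\{p^r\}$), it is enough to extend $\widetilde{\gamma}_{k-1}$ across these attachments one at a time. By Proposition \ref{prop2.1} and the cofibration of Proposition \ref{prop1.3} --- in which the map ``$p^{r+k-1}$'' occurs --- each such attachment is exactly of the ``degree $p$'' shape appearing in the hypothesis of Theorem \ref{theor1.5}, so $[E,BW_n]\to[E_0,BW_n]$ is onto and the extension exists, still restricting to $\nu$ on the fibre by construction. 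Checking that the attaching data genuinely satisfy the hypothesis of Theorem \ref{theor1.5} at every step --- the bookkeeping with the cell structure of $J_k$ and the $\Omega S^{2n+1}\{p^r\}$-twisting --- is where I expect the main difficulty to lie.

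Granted $\widetilde{\gamma}_k$, the remaining parts are comparatively formal. Set $W_k=\operatorname{hofib}(\widetilde{\gamma}_k)$: then $W_n\to W_k\to J_k$ is immediate, and since $\widetilde{\gamma}_k$ restricts to $\nu$ on $\Omega^2S^{2n+1}$ and the homotopy fibre of $\nu$ is $S^{2n-1}$ we get $S^{2n-1}\to W_k\to F_k$; composing $W_k\to J_k\xrightarrow{\xi_k}D_k$ gives a map $W_k\to D_k$ whose fibre $X$ sits in $W_n\to X\to\Omega S^{2n+1}\{p^r\}$, and by the uniqueness of such a fibration (the argument of Proposition \ref{prop1.4}, or the characterization of $T$ in \cite{GT}) $X\simeq T$, giving the second column of the diagram in \textup{(d)}; the map $R_k\to W_k$ covering $G_k\to D_k$ is obtained by identifying the pullback of the $W_k$-fibration along $G_k\to D_k$ with the $R_k$-fibration, again by uniqueness, and the remaining diagrams of \textup{(d)} are then formal consequences about homotopy fibres. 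Part \textup{(e)} reduces to the fibre inclusion $S^{2n-1}\to W_k$ being null homotopic (equivalently, to the connecting map $\Omega F_k\to S^{2n-1}$ admitting a section, which then splits the looped fibration), and this holds because that inclusion factors through $T$ while $W_k$ is $(2n-1)$-connected, so $\pi_{2n-1}(W_k)=0$. Parts \textup{(f)} and \textup{(g)} are Serre spectral sequence computations for $T\to W_k\to D_k$, matched against \textup{(a)} via $W_k\to F_k$; the exponent drop $p^r\rightsquigarrow p^{r-1}$ occurs precisely at the indices $i=p^s$ at which $D_k$ carries a $\Z/p$. Finally, for \textup{(h)}, $\alpha_{k+1}$ lifts to $R_k$ because, as in the uniqueness argument of Proposition \ref{prop1.4}, it is trivial in $\bmod\ p$ cohomology of $G_k$ and so factors through $\Omega G_{k-1}$; the nontriviality in integral cohomology of the composite $P^{2np^{k+1}}(p^{r+k+1})\to R_k\to W_k$ is read off from \textup{(g)}, and this is exactly the input that launches stage $k+1$.
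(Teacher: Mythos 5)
Your outline does capture the broad strategy --- a single induction on $k$, with (c) being the crux via Theorem~\ref{theor1.5}, and with the diagrams and splittings in (d)--(e) being formal consequences --- but there are two substantive gaps and one structural mismatch that together would prevent the argument from going through.

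First, the logical order. The paper does \emph{not} prove (a) first at stage $k$; (a) for $k$ is inherited from the previous stage, and (a) for $k+1$ is the \emph{last} thing proved, using (f), (g), (h) at stage $k$ as input. This isn't a cosmetic difference: the reason (a) must come last is that the paper has no self-contained way to compute $H_*(F_k)$ from the cell structure of $D_k$ alone.

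Second, and this is the most serious gap, your treatment of (a) asserts that ``the degree-$p$ attaching map of the top cell of $D_k$, together with $(\ast\ast)$, forces the relevant extension to be non-split, which makes $H_*(F_k)$ torsion free.'' This does not follow. The Serre spectral sequence for $\Omega S^{2n+1}\to F_{k+1}\to D_{k+1}$ collapses, but the $E^\infty$ page in total degree $2np^{k+1}$ consists of a $\Z/p^r$ and several $\Z/p$'s; knowing the attaching maps of $D_{k+1}$ and that $(\ast\ast)$ holds leaves the extension completely undetermined. What actually pins it down in the paper is a genuine homotopy-theoretic obstruction: one shows $H^{2np^{k+1}}(F_{k+1})\to H^{2np^{k+1}}(F_k)$ is \emph{not} onto, by assuming it were, feeding in (f) and (h), and extracting a map $P^{2np^{k+1}}(p^{r+k+1})\to\Omega S^{2n+1}\xrightarrow{H_{p^k}}\Omega S^{2np^k+1}$ nontrivial in cohomology, which would make the Whitehead element $\omega_{np^k}$ divisible by $p^{r+k+1}$ --- a Hopf-invariant-one contradiction. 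Without this step you cannot conclude $H^{2np^{k+1}+1}(F_{k+1})=0$, i.e.\ that $H_*(F_{k+1})$ is torsion free, nor the degree-$p$ dichotomy.

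Third, your version of (c) applies Theorem~\ref{theor1.5} to ``the cells of $J_k$ lying over the new cells of $D_k$,'' claiming these have the degree-$p$ shape by Proposition~\ref{prop2.1} and Proposition~\ref{prop1.3}. That is not the cell filtration the argument needs. The paper filters by the skeleta $F_k(m)$ of $F_k$ (so $J_k(m)$ is the restriction of $J_k\to F_k$), and the hypothesis of Theorem~\ref{theor1.5} --- a lift of the attaching map $\gamma_m\colon S^{2mn-1}\to F_k(m-1)$ divisible by $p$ --- is supplied precisely by the maps $\theta_m$ from part (b): the composite $\pi_k\theta_m$ is a $\bmod\ p$ cohomology isomorphism in degree $2mn$, which via \cite[3.2]{GT} produces a lift $\gamma_m'$ with $p^r\gamma_m'\sim\gamma_m$. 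Your sketch never invokes (b) in the proof of (c), yet (b) is exactly the engine that makes Theorem~\ref{theor1.5} applicable. The cell structure of $D_k$ by itself does not give the required divisibility of $\gamma_m$, because $F_k$ has many more cells than $D_k$, one in each dimension $2mn$.

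The rest --- (d), (e), (f), (g) --- is reasonable in outline, though (f) in the paper is carried out with the fibration $S^{2n-1}\to W_k\to F_k$ and an explicit choice of generators $e_i$ rather than $T\to W_k\to D_k$, and (h) in the paper comes from the factorization $P^{2np^{k+1}}(p^{r+k+1})\to T/T^{2np^{k+1}-2}\to R_k$ recalled from \cite[4.3(c)]{GT}, not from a cohomological vanishing argument as you suggest. But those are matters of detail; the two gaps above in (a) and (c) are where the proposal genuinely breaks down.
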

\begin{proof}
	We prove these results inductively on $k$ using
	earlier results for a given value of $k$ and all
	results for lower values of~$k$. In case $k=0$, \textup{(a)} is 
	well known (see~\cite{CMN}).
\end{proof}
\begin{proof}[Proof of \textup{(b)}]
	In case $k=0$, this is~\cite[3.1]{GT}. Suppose
	$k>0$. Since the homomorphism
	\[
	H_{2np^k}(F_k)\rightarrow H_{2np^k}(D_k)
	\]
	is onto by $(\ast\ast)$ and $H_{2np^k}(F_k)$ is free on one
	generator, the homorphism:
\[
H_{2np^k}\left(F_k;\Z/p\right)\rightarrow H_{2np^k}\left(D_k; \Z/p\right)
\]
is an isomorphism. Now consider the diagram:
\[
\xy
\xymatrix
{
&E_k\dto{}\rto{}&J_k\dto{}\rto{}&F_k\dto{}\\
P^{2np^k}\left(p^{r+k-1}\right)
\urto^{\widetilde{a}_k}\rto^{\makebox[20pt]{}a_k}&G_k\rto{}&
D_k\ar@{=}[r]
&D_k
}
\endxy
\]

Since the lower composition also induces an
isomorphism in\linebreak[4]
 $H_{2np^k}(\mbox{\ }; \Z/p)$, we conclude that
the upper composition does as well. Let $\theta_{p^k}$ be
the composition:
\[
P^{2np^k}\left(p^r\right)\xrightarrow{}P^{2np^k}\left(p^{r+k-1}\right)\xrightarrow{\widetilde{a}_k}E_k\xrightarrow{}J_k.
\]
This satisfies \textup{(b)} in case $i=p^k$. We now construct $\theta_{m+1}$
for $m\geqslant p^k$ by induction. Having constructed $\theta_m$,
consider the diagram of vertical fibration
sequences:
\[
\begin{CD}
	P^{2(m+1)n}\!\left(p^r\right)\!\!\!@>>>\!\!\!P^{2mn}\!\left(p^r\right)\!{}\rtimes{}\!\Omega P^{2n+1}\!\left(p^r\right)\!\!\!@>>>\!\!\!J_k\!\!\!@>>>\!\!\!F_k\\
@.	@VVV @V{\xi_k}VV @VVV\\
@.	P^{2n+1}\!\left(p^r\right) \vee P^{2mn}\!\left(p^r\right)\!\!\!@>{\iota \vee\xi_k\theta_m}>>\!\!\!D_k\!\!\!@=\!\!\!D_k\\
@.	@VVV @VVV @VVV\\
@.	P^{2n+1}\!\left(p^r\right)\!\!\!@>>>\!\!\!S^{2n+1}\!\left\{p^r\right\}\!\!\!@>>>\!\!\!S^{2n+1}
\end{CD}
\]
\noindent
We will compare the $\bmod\  p$ cohomology spectral sequences
for the first and last fibration, and in particular,
the differential:
\[
\begin{CD}
	d_{2n+1}\colon E^{2n+1}_{0,2(m+1)n}@>>> E^{2n+1}_{2n+1, 2mn}.
\end{CD}
\]
In the righthand spectral sequence, the differential
is an isomorphism as both groups are $Z/p$ and the
dimension of~$D_k$ is less than\linebreak[4]
 $2(m+\nobreak1)n$. The map
of fibrations induces the following homomorphism on
$E^{2n+1}_{2n+1,2mn}$ (where the coefficients are $Z/p$):
\begin{multline*}
H^{2mn}\left(F_k\right)\otimes H^{2n+1}\left(S^{2n+1}\right)\rightarrow\\
 H^{2mn}\left(P^{2mn}\left(p^r\right)\rtimes \Omega P^{2n+1}\left(p^r\right)\right)\otimes H^{2n+1}\left(P^{2n+1}\left(p^r\right)\right).
\end{multline*}
Since the composition $\pi_k\theta_m$ induces an isomorphism
in $H^{2mn}(\mbox{\ }; Z/p)$, this homomorphism is an isomorphism as well. It follows that the homomorphism induced on
$E^{2n+1}_{0,2(m+1)n}$ is nonzero and hence an isomorphism. Thus
$\pi_k\theta_{m+1}$ induces an isomorphism in $H^{2(m+1)n}(\mbox{\ }; Z/p)$.
\end{proof}
\begin{proof}[Proof of \textup{(c)}]
	In case $k=0$, this is \cite[3.5]{GT}. Suppose
	that $k>0$. Write $F_k(m)$ for the 2mn skeleton of~$F_k$ and
	$J_k(m)$ for the total space of the induced fibration
	over $F_k(m)$:
\[
\begin{CD}
	\Omega^2S^{2n+1} @= \Omega^2S^{2n+1}\\
	@V{i_k}VV @VVV\\
	J_k(m)@>>> J_k\\
	@VVV @VVV\\
	F_k(m)@>>> F_k
\end{CD}
\]
We will construct a compatible sequence of maps:
\[
\widetilde{\gamma}_k(m)\colon J_k(m)\rightarrow BW_n
\]
with $\widetilde{\gamma}_k(m)i_k\sim \nu\colon \Omega^2S^{2n+1}\rightarrow BW_n$ for a fixed $k$ and $m\geqslant 1$.
Since $D_k=D_{k-1}\cup CP^{2np^k}$, the pair $(F_k, F_{k-1})$ is $2np^k-1$
connected. Consequently if $m<p^k$, $F_{k-1}(m)=F_k(m)$
and $J_{k-1}(m)=J_k(m)$. We begin the induction on $m$ by
defining $\widetilde{\gamma}_k(m)=\widetilde{\gamma}_{k-1}(m)$ when $m<p^k$. Now
$F_k(m)=F_k(m-1)\cup_{\gamma_m} e^{2mn}$. We wish to apply Theorem~\ref{theor1.5} 
to the diagram:
\[
\begin{CD}
	\Omega^2S^{2n+1}@= \Omega^2S^{2n+1}\\
	@VVV @VVV\\
	J_k(m-1)@>>>J_k(m)\\
	@VVV @VVV\\
	F_k(m-1)@>>>F_k(m)
\end{CD}
\]
It suffices to show that there
is a lifting $\gamma'_m$ of $\gamma_m$ which is divisible by~$p$.
\[
\xy
\xymatrix
{
&J_k(m-1)\dto{}\\
S^{2mn-1}
\urto^{\gamma'_m}
\rto^{\makebox[50pt]{\kern-10pt\scriptsize\hfill$\gamma_m$\hfill}}
&F_k(m-1)
}
\endxy
\]
In fact, we will construct a lifting $\gamma'_m$ of $\gamma_m$ which is
divisible by~$p^r$. 

The composition:
\[
S^{2mn-1}\rightarrow P^{2mn}\left(p^r\right)\xrightarrow{\theta_m} J_k(m)\xrightarrow{\pi_k} F_k(m)
\]
factors through $F_k(m-1)$ for dimensional
reasons:
\[
\begin{CD}
	S^{2mn-1}@>>> P^{2mn}\left(p^r\right)\\
	@V{x'}VV @VV{\pi_k\theta_m}V\\
	F_k(m-1)@>>>F_k(m)
\end{CD}
\]
with $p^rx'\sim\gamma_m$, since $\pi_k\theta_m$ induces an isomorphism
in $H^{2mn}(\mbox{\ }; \Z/\rho)$. (For complete details, apply \cite[3.2]{GT}
with $M\!{}={}\!S^{2mn-2}\!$, $X\!{}={}\!F_k(m-\nobreak1)$, $f=\gamma_m$, $x=\pi_k\theta_m$, and $s=r$).
Since $J_k(m-1)$ is a pullback, $x'$ factors through
$J_{k}(m-1)$:
\[
\begin{CD}
	S^{2mn-1}@>>>P^{2mn}\left(p^r\right)\\
	@V{\gamma'_m}VV @V{\theta_m}VV\\
	J_k{(m-1)}@>>> J_k(m)\\
	@V{\pi_{k}}VV @V{\pi_k}VV\\
	F_{k}(m-1)@>>> F_k(m)
\end{CD}
\]
and $\pi_{k-1}\gamma'_m=x'$ so $p^r\pi_{k-1}\gamma'_m\sim \gamma_m$.
Thus we have constructed
$\widetilde{\gamma}_k\colon J_k\rightarrow BW_n$ for each $k\geqslant 1$. $\tilde{\gamma}_k|_{J_{k-1}}$ may not be
homotopic to $\widetilde{\gamma}_{k-1}$, but they are homotopic
on $J_{k-1}(m)$ for $m<p^k$. Thus we may define
\[
\widetilde{\gamma}_{\infty}\colon J_{\infty}\rightarrow BW_n
\]
by taking the direct limit of the $\widetilde{\gamma}_k$ and then
redefine $\widetilde{\gamma}_k$ as the restriction of~$\tilde{\gamma}_{\infty}$.
\end{proof}
\begin{proof}[Proof of \textup{(d)}]
	The map $\nu_k\colon E_k\rightarrow BW_n$ defined
	in \cite[4.3(h)]{GT} was an arbitrary map such that
	the composition:
	\[
	\Omega^2S^{2n+1}\rightarrow\Omega S^{2n+1}\left\{p^r\right\}\rightarrow E_0\rightarrow E_k\xrightarrow{\nu_k} BW_n
	\]
	is homotopic to $\nu$. Since $J_0=E_0$, $\widetilde{\gamma}_0=\nu_0$
	and $\widetilde{\gamma}_k$ is an arbitrary extension of $\widetilde{\gamma}_{k-1}$, we can 
redefine $\nu_k$ as the composition
\[
E_k\rightarrow J_k\xrightarrow{\widetilde{\gamma}_k}BW_n
\]
from which it follows that we have a
commutative diagram of fibration sequences
\[
\begin{CD}
	R_k@>>>E_k@>{\nu_k}>> BW_n\\
	@VVV @VVV @|\\
	W_k@>>>J_k@>{\widetilde{\gamma}_k}>> BW_n
\end{CD}
\]
where $W_k$ is the fiber of $\widetilde{\gamma}_k$.
Consequently the square:
\[
\begin{CD}
	R_k@>>> W_k\\
	@VVV @VVV\\
	G_k@>>> D_k
\end{CD}
\]
is the composition of two pullback squares:
\[
\begin{CD}
	R_k@>>> E_k@>>> G_k\\
	@VVV @VVV @VVV\\
	W_k @>>> J_k @>>> D_k
\end{CD}
\]
so it is a pullback square and first diagram in~\textup{(d)}
is a diagram of vertical fibration sequences. The
second diagram follows from the definition of $W_k$
and the third is a combination of the first two.
\end{proof}
\begin{proof}[Proof of \textup{(e)}]
Extending the third diagram of~\textup{(d)} to the left
yields a diagram:
\[
\begin{CD}
	\Omega^2S^{2n+1} @>{\pi_n}>> S^{2n-1}\\
	@VVV @VVV\\
	\ast @>>> W_k\\
	@VVV @VVV\\
	\Omega^2S^{2n+1}@>>>F_k
\end{CD}
\]
Both horizontal maps have degree $p^r$ in the lowest
dimension, so $W_k$ is $4n-2$ connected and the map
$S^{2n-1}\rightarrow W_k$ is null homotopic. From this it follows
that
\[
\Omega F_k\simeq S^{2n-1}\times \Omega W_k
\]
\end{proof}
\begin{proof}[Proof of \textup{(f)}]
	Let $\phi\colon\Omega S^{2n+1}\rightarrow F_k$ be the connecting
	map in the fibration that defines $F_k$. Let
	$u_i\in H^{2ni}(\Omega S^{2n+1})$ be the generator dual to the $i^{\text{th}}$
	power of a generator in $H_{2n}(\Omega S^{2n+1})$. Then
	\[
	u_i u_j=\left(\begin{matrix}
		i+j\\
		i
	\end{matrix}\right)u_{i+j}.
	\]
	Choose generators $e_i\in H^{2ni}(F_k)$ so that
	\[
	\phi^{\ast}\left(e_i\right)=
	\begin{cases}
		p^{r+d}u_i&\mbox{if}\quad p^d\leqslant i<p^{d+1},\ d\leqslant
		k\\
		p^{r+k}u_i&\mbox{if}\quad i\geqslant p^k.
	\end{cases}
	\]
	Since $\phi^{\ast}$ is a monomorphism, it is easy to check
	that
	\[
	e_1e_{i-1}=
	\begin{cases}
		ip^{r-1}e_i&\mbox{if}\quad i=p^s\quad 0<s\leqslant k\\
		ip^r e_i&\mbox{otherwise}.
	\end{cases}
	\]
	It now follows from the integral cohomology
	spectral sequence for the fibration:
	\[
	S^{2n-1}\rightarrow W_k\rightarrow F_k
	\]
	that
	\[
	d_{2n}\left(e_{i-1}\otimes u\right)=
	\begin{cases}
		ip^{r-1}e_i&\mbox{if}\quad i=p^s\quad 0<s\leqslant k\\
		ip^r e_i&\mbox{otherwise}.
	\end{cases}
	\]
	From this one can read off the cohomology groups
	of~$W_k$ since $H^i(F_k)=Z_{(p)}$ or $0$ according as to
	whether $j$ is a multiple of~$2n$.
\end{proof}
\begin{proof}[Proof of \textup{(g)}]
	From \textup{(d)} we have a homotopy commutative
	square:
	\[
	\begin{CD}
		T@>>> \Omega S^{2n+1}\\
		@VVV @VV{\phi}V\\
		W_k@>>> F_k
	\end{CD}
	\]
	Applying cohomology we get:
	\[
	\xy
	\xymatrix
	{
	H^{2np^{k+1}}(T)&\lto{} H^{2np^{k+1}}(\Omega S^{2n+1})\\
	H^{2np^{k+1}}(W_k)\uto{}&\lto{} H^{2np^{k+1}}(F_k)\uto{}\\
	}
	\endxy
	\]
	which evaluates as:
	\[
	\xy
	\xymatrix
	{
	Z\big/p^{r+k+1}&\lto{} Z_{(p)}\\
	Z\big/p^{r+k+1}\uto^{}&\lto{}Z_{(p)}\uto_{p^{r+k}}\\
	}
	\endxy
	\]
where the two horizontal arrows are epimorphisms.
	It follows that the homorphism $H^{2np^{k+1}}(W_k)\rightarrow
	H^{2np^{k+1}}(T)$
	has image of order~$p$.
\end{proof}
\begin{proof}[Proof of \textup{(h)}]
	Recall from the proof of \cite[4.3(c)]{GT} that
	the composition:
	\[
	P^{2np^{k+1}}\left(p^{r+k+1}\right)\rightarrow
	T\big/T^{2np^{k+1}-2}\rightarrow R_k\rightarrow G_k
\]
is homotopic to $\alpha_{k+1}$. Write $\widetilde{\alpha}_{k+1}$ for the
composition
of the first two maps, so we get a homotopy
commutative diagram:
\[
\begin{CD}
T^{2np^{k+1}}@>>>
P^{2np^{k+1}}\left(p^{r+k+1}\right)@>{\widetilde{\alpha}_{k+1}}>>R_k\\
@VVV @VVV @VVV\\
T^{2np^{k+1}}\big/T^{2np^{k+1}-2}@>>>
		T\big/T^{2np^{k+1}-2}@>>>W_k
	\end{CD}.
	\]
	By \textup{(g)}, the composition on the left and bottom is
	nonzero in integral cohomology, so the
	composition on the top and right is nonzero
	in integral cohomology also.
\end{proof}
\begin{proof}[Proof of \textup{(a)} in the case $k+1$]
	We consider the diagram
	\[
	\xy
	\xymatrix
	{
&&&&\Omega S^{2n+1}\dto^{\delta_{k+1}}\\
	P^{2np^{k+1}}\left(p^{r+k+1}\right)
\ar@<1ex>[urrrr]^{\Gamma}
\drto^{\alpha_{k+1}}
\ar@<0ex>[r]^{\kern34pt\widetilde{\alpha}_{k+1}}
&R_k\dto{}\rto{}&W_k\dto{}\rto{}&F_k\dto{}\rto{}&F_{k+1}\dto{}\\
	&G_k\ar@{-}[r]
&D_k\ar@{=}[r]
&D_k\ar@{-}[r]
&D_{k+1} 
	}
	\endxy
	\]
	where the map $\Gamma$ exists since the lower composite
	factors as
	\[
	P^{2np^{k+1}}\left(p^{r+k+1}\right)\xrightarrow{\alpha_{k+1}}
	G_k\rightarrow G_{k+1}\rightarrow D_{k+1}
	\]
We now show that the homorphism:
\[ 
H^{2np^{k+1}}\left(F_{k+1}\right)\rightarrow H^{2np^{k+1}}\left(F_k\right)
\]
is not an epimorphism. If it were, the entire
composition:
\[
H^{2np^{k+1}}\left(F_{k+1}\right)\rightarrow
H^{2np^{k+1}}\left(P^{2np^{k+1}}\left(p^{r+k+1}\right)\right)
\]
would be nonzero by \textup{(f)} and \textup{(h)}. But $\delta_{k+1}$
factors as
\[
\Omega S^{2n+1}\xrightarrow{\delta_k} F_k\rightarrow F_{k+1}
\]
so the image of $\delta_{k+1}^{\ast}\colon H^{2np^{k+1}}(F_{k+1})\rightarrow
H^{2np^{k+1}}(\Omega S^{2n+1})$
is divisible by $p^{r+k}$ by~\textup{(a)}. This implies that
\[
\Gamma^{\ast}\colon H^{2np^{k+1}}\left(\Omega S^{2n+1}\right)\rightarrow
H^{2np^{k+1}}\left(P^{2np^{k+1}}\left(p^{r+k+1}\right)\right)
\]
is an epimorphism. This contradicts Hopf  invariant one
as follows. The composition:
\[
P^{2np^{k+1}}\left(p^{r+k+1}\right)\rightarrow \Omega
S^{2n+1}\xrightarrow{H_{p^k}} \Omega S^{2np^k+1}
\]
would also be nonzero in cohomology so the Whitehead
element
\[
\omega_{np^k}\in \pi_{2np^{k+1}-3}\left(S^{2np^k-1}\right)
\]
would be divisible by $p^{r+k+1}$. We have thus shown that
\[
H^{2np^{k+1}}\left(F_{k+1}\right)\rightarrow H^{2np^{k+1}}\left(F_k\right)
\]
is not onto. From the Serre spectral sequence for
the cohomology of the fibration
\[
\Omega S^{2n+1}\rightarrow F_{k+1}\rightarrow D_{k+1}
\]
we obtain the following exact 
sequence:
\begin{align*}
0\leftarrow H^{2np^{k+1}+1}\left(F_{k+1}\right)
\leftarrow Z/p
\leftarrow H^{2np^k+1}\left(F_k\right)\leftarrow
H^{2np^{k+1}}\left(F_{k+1}\right)\leftarrow 0
\end{align*}
where $H^{\ast}(F_k)$ is obtained from the restriction of the
fibration to~$D_k$. It follows that
\begin{align*}
	H^{2np^{k+1}+1}\left(F_{k+1}\right)&=0\\
	H^{2np^{k+1}}\left(F_{k+1}\right)&\simeq Z_{(p)}\\
	\text{and}\quad H^{2np^{k+1}}\left(F_{k+1}\right)&\rightarrow
	H^{2np^{k+1}}\left(F_k\right)
\end{align*}
has degree $p$, and $(\delta_{k+1})^{\ast}$
has degree $p^{r+k+1}$ in dimension $2np^{k+1}$. We now switch to integral
homology and use the principal action
\[
\begin{CD}
	\Omega S^{2n+1}\times \Omega S^{2n+1}@>>> \Omega S^{2n+1}\\
	@VVV @VVV\\
	\Omega S^{2n+1}\times F_{k+1}@>>> F_{k+1}
\end{CD}
\]
to study $H_{2ni}(F_{k+1})$ when $i>p^{k+1}$. Observe that
the new generator~$e_i$ comes from the term
\[
E^{\infty}_{2np^{k+1}, 2ni-2np^{k+1}}
\]
and consequently $e_i=u_{i-p^{k+1}}\cdot e_{p^{k+1}}$, so
\begin{align*}
	p^{r+k+1}e_i&=u_{i-p^{k+1}}\cdot
	\left(p^{r+k+1}e_{p^{k+1}}\right)\\
	&= u_{i-p^{k+1}}\cdot u_{p^{k+1}}\\
	&=u_i
\end{align*}
so $(\delta_{k+1})_{\ast}$ has degree $p^{k+1}$ in
$H_{2ni}$ for $i\geqslant p^{k+1}$. This completes the proof of~2.2.
\end{proof}

Theorem~A follows by taking limits.
The cofibration $C\xrightarrow{c} G\rightarrow D$ is the limit
of $C_k\rightarrow G_k\rightarrow D_k$ and the fibrations
\begin{align*}
	T&\rightarrow W\rightarrow D\\
	F&\rightarrow D\rightarrow S^{2n+1}
\end{align*}
are the respective limits of
\begin{align*}
	T&\rightarrow W_k\rightarrow D_k\\
	\text{and}\qquad	F_k&\rightarrow D_k\rightarrow S^{2n+1}.
\end{align*}
	\section{}\label{sec3}
	In this section we will prove Theorem~B. To
	this end, consider the limiting diagram of
	the fibrations in~\textup{2.2(d)} over $k$:
\[
\begin{CD}
	T@= T\\
	@V{i}VV @VVV\\
	R@>>>W\\
	@VVV @VVV\\
	G@>>> D
\end{CD}
\]
Since $D$ is the mapping cone of the map $c\colon C\rightarrow D$,
the induced fibration over $C$ is trivial and
we have a map:
\[
C\times T\rightarrow R.
\]
Since the inclusion of $T$ into $R$ is null homotopic,
this extends to a map
\[
C\rtimes T\rightarrow R.
\]
According to \cite[Lemma A6]{AG} we get a
cofibration sequence:
\[
C\rtimes T\rightarrow R\rightarrow W
\]
\begin{theorem}\label{theor3.1}
	If $n>1$ there is a split short
	exact sequence
	\[
	0\rightarrow \widetilde{H}_{\ast}(C\rtimes T)\rightarrow
	\widetilde{H}_{\ast}(R)\rightarrow
	\widetilde{H}_{\ast}(W)\rightarrow 0.
	\]
\end{theorem}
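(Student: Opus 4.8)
The plan is to apply reduced homology to the cofibre sequence $C\rtimes T\xrightarrow{\iota}R\xrightarrow{q}W$ supplied by \cite[Lemma A6]{AG} and to show that the resulting long exact sequence collapses to the asserted short exact sequence and that this splits. Thus two things must be checked: (i) the connecting homomorphism $\partial\colon\widetilde H_n(W)\to\widetilde H_{n-1}(C\rtimes T)$ is zero for every $n$, so that $0\to\widetilde H_*(C\rtimes T)\xrightarrow{\iota_*}\widetilde H_*(R)\xrightarrow{q_*}\widetilde H_*(W)\to0$ is exact; and (ii) this sequence is split.

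Step (i) I would do by a dimension count modulo $2n$. First record where the three reduced homologies sit. Since $C=\bigvee_iP^{2np^i+1}(p^{r+i-1})$, $\widetilde H_*(C)$ is concentrated in dimensions $\equiv0\pmod{2n}$. From the fibration $S^{2n-1}\to T\to\Omega S^{2n+1}$, whose fibre transgresses to $\pm p^r$ times a generator of $H^{2n}(\Omega S^{2n+1})$, the Serre spectral sequence forces (using the multiplicative structure to propagate the transgression) $\widetilde H_j(T)\cong\Z/p^r$ when $j=2n\ell-1$, $\ell\ge1$, and $\widetilde H_j(T)=0$ otherwise, so $\widetilde H_*(T)$ is concentrated in dimensions $\equiv-1\pmod{2n}$. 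By Theorem~A(d), together with universal coefficients (all these groups being finite), $\widetilde H_*(W)$ is concentrated in dimensions $\equiv-1\pmod{2n}$. As $C\rtimes T\simeq T_+\wedge C$, the Künneth theorem gives $\widetilde H_*(C\rtimes T)\cong\widetilde H_*(C)\oplus\bigl(\widetilde H_*(T)\otimes\widetilde H_*(C)\bigr)\oplus\operatorname{Tor}\bigl(\widetilde H_*(T),\widetilde H_*(C)\bigr)$, hence is concentrated in dimensions $\equiv0$ or $\equiv-1\pmod{2n}$. Now $\partial$ lowers dimension by one, so it carries $\widetilde H_n(W)$ (in dimensions $\equiv-1$) into dimension $n-1\equiv-2\pmod{2n}$; since $n>1$ gives $2n\ge4$, one has $-2\not\equiv0,-1\pmod{2n}$, the target is zero, and $\partial=0$. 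This is the only point at which the hypothesis $n>1$ is used.

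For step (ii) it suffices to split the exact sequence degree by degree, and the only dimensions in which both outer terms are nonzero are $n=2nm-1$. There only the tensor summand of the Künneth formula contributes, and since $\Z/p^r\otimes\Z/p^{r+i-1}\cong\Z/p^r$ for $i\ge1$ one finds $\widetilde H_{2nm-1}(C\rtimes T)\cong(\Z/p^r)^{\oplus N_m}$ with $N_m=\#\{i\ge1:p^i<m\}$, whereas $\widetilde H_{2nm-1}(W)\cong\Z/p^{e_m}$ is cyclic of order $p^{e_m}\ge p^r$ by Theorem~A(d). Thus the sequence in this dimension is $0\to(\Z/p^r)^{\oplus N_m}\to\widetilde H_{2nm-1}(R)\to\Z/p^{e_m}\to0$, and it splits as soon as one exhibits a class of $\widetilde H_{2nm-1}(R)$ of order exactly $p^{e_m}$ whose image generates $\widetilde H_{2nm-1}(W)$. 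I would extract such classes from the inductive construction of $R$ as the union of the $R_k$: the Moore-space maps $\widetilde a_k,\widetilde c_k$ into $R_k$ of Proposition~\ref{prop1.2}, the lift $\widetilde\alpha_{k+1}\colon P^{2np^{k+1}}(p^{r+k+1})\to R_k$ of Theorem~\ref{2.2}(h), and the cohomological detection in Theorem~\ref{2.2}(g) together pin down $\widetilde H_{2nm-1}(R)\cong(\Z/p^r)^{\oplus N_m}\oplus\Z/p^{e_m}$, with $\iota_*$ carrying $\widetilde H_{2nm-1}(C\rtimes T)$ isomorphically onto the first summand. The main obstacle is exactly this last point: the degree argument by itself delivers only the short exact sequence, and \emph{a priori} its middle term could be a nonsplit extension — for instance $(\Z/p^r)^{N_m-1}\oplus\Z/p^{e_m+r}$, which does contain a $(\Z/p^r)^{N_m}$ with cyclic quotient $\Z/p^{e_m}$ — so excluding this, unlike step (i), is not formal and rests on the detailed structure of $R$ coming from \cite{GT} and the constructions of \S1--\S2.
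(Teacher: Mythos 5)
Your step (i) — showing the connecting map vanishes by comparing residues modulo $2n$ — is exactly the paper's argument, and it is correct. Your step (ii) is the genuine gap, and you have correctly diagnosed it as such: a short exact sequence $0\to(\Z/p^r)^{N}\to B\to\Z/p^{e}\to 0$ need not split, and the degree count cannot decide it. But the route you sketch (pinning down explicit generators of $\widetilde H_{2nm-1}(R)$ via $\widetilde a_k,\widetilde c_k,\widetilde\alpha_{k+1}$ and the cohomological detection of Theorem~\ref{2.2}(g),(h)) is not the one the paper takes, and you do not carry it out; the paper closes the gap by a much cleaner exponent argument that you missed.

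The key fact is \cite[Theorem C]{G4}: $R$ is a retract of $\Sigma T\wedge T$. By \cite[4.3(m)]{GT}, $\Sigma T\wedge T$ is a wedge of Moore spaces, and one can read off that the summands meeting degree $2nk-1$ are of the form $\Sigma P^{2ni}(p^{r+\nu(i)})\wedge P^{2nj}(p^{r+\nu(j)})$ with $i+j=k$. An element of order $p^m$ in $\widetilde H_{2nk-1}$ therefore forces $p^{m-r}\mid i$ and $p^{m-r}\mid j$, hence $p^{m-r}\mid k$, so $kp^r$ annihilates $\widetilde H_{2nk-1}(\Sigma T\wedge T)$; and when $k=p^s$ one even gets $kp^{r-1}$ as an annihilator since then $\nu(i),\nu(j)<s$. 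Passing to the retract $R$, the exponent of $\widetilde H_{2nk-1}(R)$ is thus at most $p^{e_k}=\bigl|\widetilde H_{2nk-1}(W)\bigr|$. That bound \emph{is} the splitting: any preimage in $\widetilde H_{2nk-1}(R)$ of a generator of the cyclic quotient $\Z/p^{e_k}$ has order exactly $p^{e_k}$ and generates a complementary summand. In particular your candidate non-split extension $(\Z/p^r)^{N_m-1}\oplus\Z/p^{e_m+r}$ is ruled out precisely because its exponent $p^{e_m+r}$ exceeds the bound.

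One further inaccuracy worth flagging: you assert $\widetilde H_{2n\ell-1}(T)\cong\Z/p^r$ for all $\ell\geqslant 1$. In fact $\widetilde H_{2n\ell-1}(T)\cong\Z/p^{r+\nu(\ell)}$ (this is visible, e.g., in the proof of Theorem~\ref{2.2}(g), where $H^{2np^{k+1}}(T)\cong\Z/p^{r+k+1}$, and it is what makes Lemma~\ref{lem3.2} work). This does not affect your mod-$2n$ degree count in step (i), but it does invalidate the K\"unneth computation $\widetilde H_{2nm-1}(C\rtimes T)\cong(\Z/p^r)^{\oplus N_m}$ you used to frame step (ii).
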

	\begin{proof}
		We first check that the connecting
		homorphism:
		\[
		H_j(W)\rightarrow H_{j-1}(C\rtimes T)
		\]
		is trivial. By \textup{2.2(f)}, $H_j(W)\neq0$ only when
		$j=2ni-1$. However,
		\[
		C\rtimes T\simeq
		\bigvee^{\infty}_{i=1}P^{2np^i+1}\left(p^{r+i-1}\right)\rtimes
		T
		\]
		so $H_{j-1}(C\rtimes T)$ is only nonzero when $j-1=2n\ell$
		or
		$2n\ell-1$. So if $n>1$, one or the other of these groups
		is trivial. 
		To see that the sequence splits,\
		note that by \textup{2.2(f)}
		\[
		H_{2nk-1}(W)=
		\begin{cases}
			\Z_{(p)}\big/kp^{r-1}&\mbox{if}\quad k=p^s\\
			Z_{(p)}\big/kp^r&\mbox{otherwise}.
		\end{cases}
		\]
		It suffices to show that
		\begin{align*}
			kp^r\widetilde{H}_{2nk-1}(R)&=0\\
			kp^{r-1}H_{2nk-1}(R)&=0\quad\mbox{if}\ k=p^s
		\end{align*}
		Now according to \cite[Theorem C]{G4}, $R$ is a retract of
		$\Sigma T\wedge T$,
		so it suffices to prove
	\end{proof}
\begin{lemma}\label{lem3.2}
$kp^r\widetilde{H}_{2nk-1}(\Sigma T\wedge T)=0$ and if $k=p^s$
\[
kp^{r-1}\widetilde{H}_{2nk-1}(\Sigma T\wedge T)=0.
\]
\end{lemma}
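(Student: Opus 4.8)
The plan is to reduce everything to an explicit computation of the integral homology of $\Sigma T\wedge T$, using the known structure of $H_*(T)$ from \cite{GT}. First I would recall that, $p$-locally, $\widetilde H_*(T;\Z_{(p)})$ is concentrated in degrees $2ni$ and $2ni+1$, with the degree-$2np^s$ and degree-$(2np^s+1)$ classes linked by a $\bmod\ p^{r+s}$ Bockstein and all other adjacent pairs linked by higher Bocksteins (this is the Moore-space building-block structure of $T$, so that $T\in$ a class built from $P^{2ni+1}(p^{r+\nu})$'s). Consequently $\widetilde H_j(T)$ is a finite cyclic $p$-group $\Z_{(p)}/p^{a(j)}$ in each relevant degree, where $a(j)\le r+k$ once $j<2np^{k+1}$; more precisely, in total degree $\le 2nk+1$ every homology group of $T$ is killed by $p^{r+k-1}$ except possibly a single class in the very top range, killed by $p^{r+k}$.

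Next I would use the Künneth theorem for $\widetilde H_*(\Sigma T\wedge T)=\widetilde H_{*-1}(T\wedge T)$, writing
\[
\widetilde H_{2nk-1}(\Sigma T\wedge T)\cong\bigoplus_{a+b=2nk-2}\widetilde H_a(T)\otimes\widetilde H_b(T)\ \oplus\ \bigoplus_{a+b=2nk-3}\mathrm{Tor}\bigl(\widetilde H_a(T),\widetilde H_b(T)\bigr).
\]
Each summand is a cyclic $p$-group whose order divides both $|\widetilde H_a(T)|$ and $|\widetilde H_b(T)|$ (for a tensor product the exponent is the minimum of the two exponents; for Tor likewise). The point is that in any such splitting with $a+b\approx 2nk$, one of the two indices is at most $nk$, so the corresponding factor of $T$ lies in a skeleton where all homology is annihilated by $p^{r+\lceil k/2\rceil-1}$, which is far better than $p^{r+k-1}$ — certainly the product summand is killed by $p^{r+k-1}$, giving $kp^{r-1}\widetilde H_{2nk-1}(\Sigma T\wedge T)=0$ whenever the exponent bound suffices, and always $kp^{r}\widetilde H_{2nk-1}(\Sigma T\wedge T)=0$. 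I would organize this by cases on where the "jump" degrees $2np^s$ fall relative to $a$ and $b$: the only way to get an exponent as large as $r+k-1$ in a single factor is to have that factor land in degree close to $2nk$, forcing the other factor into low degrees where its exponent is small, and the $p$-adic valuation of $k$ (the factor "$k$" in the statement) exactly accounts for the $\bmod\ p^{r+s}$ versus $\bmod\ p^{r+s-1}$ discrepancy at the $s = \nu(k)$ level.

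The main obstacle, I expect, is the bookkeeping at the borderline case $k=p^s$: there $H_{2nk-1}(W)=\Z_{(p)}/kp^{r-1}$ demands the sharper bound $kp^{r-1}\widetilde H_{2nk-1}(\Sigma T\wedge T)=0$, and one has to check that no Künneth summand $\widetilde H_a(T)\otimes\widetilde H_b(T)$ or $\mathrm{Tor}$ term with $a+b\approx 2np^s$ can have $p$-exponent exceeding $r+s-1+\nu(k)=2s+\ldots$ — i.e. that a class of exponent $r+s$ in one factor (which does exist, at degree $2np^s$ or $2np^s+1$) is never tensored against a class of comparable exponent. Since $a+b$ is roughly $2np^s$ and the exponent function $a(j)$ grows only logarithmically in $j$, a class of exponent $r+s$ sits in degree $\ge 2np^s$, leaving the complementary factor in degree $\le$ (a few), hence exponent $r$ or less; the product then has exponent $\le r$, well under $r+s-1$ as soon as $s\ge1$. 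I would make this precise with a short lemma: if $a(j)$ denotes the exponent of $\widetilde H_j(T)$, then $a(a)+a(b)\le r+k-1+\nu(k)$ is false in the only dangerous configuration, and a direct inspection of the two or three degrees where $a(\cdot)$ is maximal finishes it. The argument uses only the explicit Moore-space decomposition of $T$ recorded in \cite{GT} together with Künneth, so no new homotopy theory is needed.
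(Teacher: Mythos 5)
Your overall strategy is sound and essentially the same as the paper's: reduce the claim to a computation of exponents in a wedge/K\"unneth decomposition of $\Sigma T\wedge T$, using that the exponent of $\Z/p^a\otimes\Z/p^b$ (and of the corresponding $\mathrm{Tor}$) is $p^{\min(a,b)}$. The paper uses the explicit wedge-of-Moore-spaces decomposition from \cite[4.3(m)]{GT}, $\Sigma T\wedge T\simeq\Sigma\bigvee_{i,j\geqslant 1}P^{2ni}(p^{r+\nu(i)})\wedge P^{2nj}(p^{r+\nu(j)})$, which is just a tidied-up version of your K\"unneth computation. So the framework is right.

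However, there is a genuine gap in how you bound the exponent. You claim a summand is killed by $p^{r+\lceil k/2\rceil-1}$ because ``one of the two indices is at most $nk$,'' and later you argue that a class of exponent $r+s$ forces its complementary factor into ``exponent $r$ or less.'' Neither estimate is correct and neither gives what is needed. What must be shown is that every summand has exponent at most $r+\nu(k)$ — and $\nu(k)$ can be $0$ even when $k$ is large, so a bound like $r+\lceil k/2\rceil-1$ is far too weak. Similarly, if $k=2p^s$ one can take $i=j=p^s$ and both factors have exponent exactly $r+s$, so the ``complementary factor has exponent $\leqslant r$'' claim is false. The one observation that closes the argument, and which your proposal never states, is purely number-theoretic: a nonzero summand at degree $2nk-1$ comes from a pair $(i,j)$ with $i+j=k$, and if its exponent is $p^m$ then $m\leqslant r+\nu(i)$ and $m\leqslant r+\nu(j)$, i.e.\ $p^{m-r}$ divides both $i$ and $j$; hence $p^{m-r}$ divides $i+j=k$, so $m\leqslant r+\nu(k)$. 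The sharper bound for $k=p^s$ then follows because $i,j\geqslant 1$ with $i+j=p^s$ forces $\nu(i),\nu(j)\leqslant s-1$. Without that divisibility step, the ``direct inspection of the two or three degrees where $a(\cdot)$ is maximal'' you defer to will not produce the required inequality, and the proposal as written does not prove the lemma.
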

\begin{proof}
	$\Sigma T\wedge T$ is a wedge of Moore spaces by \cite[4.3(m)]{GT}
	and the dimensions and orders can be read off from
	the homology groups. Let $\nu(x)$ be the number of
	powers of~$p$ in~$x$. Then
	\[
	\Sigma T\wedge T=\Sigma \bigvee_{\substack{j\geqslant 1\\
	i\geqslant 1}} P^{2ni}\left(p^{r+\nu(i)}\right)\wedge
	P^{2nj}\left(p^{r+\nu(j)}\right).
	\]
	This is a wedge of Moore spaces of dimension $2nk$
	and $2nk+1$. An element of $H_{2nk-1}(\Sigma T\wedge T)$ of order
	$p^m$ must lie in
	\[
	\Sigma P^{2ni}\left(p^{r+\nu(i)}\right)\wedge
	P^{2nj}\left(p^{r+\nu(j)}\right)
	\]
	where $k=i+j$, $m\leqslant r+\nu(i)$ and $m\leqslant r+\nu(j)$.
	Consequently $p^{m-r}$ must divide both $i$ and~$j$.
	Since $k=i+j$, $p^{m-r}$ divides $k$ and the element
	has order dividing $kp^r$. But if $k=p^s$ we
	must have $\nu(i)<s$ and $\nu(j)<s$ so $p^{m-r}$ divides
	$p^{s-1}$; i.e., $p^m$ divides $kp^{r-1}$.

	Since $R$ is a wedge of Moore spaces and the
	homomorphism
	\[
	\widetilde{H}_{\ast}(R)\rightarrow \widetilde{H}_{\ast}(W)
	\]
	has a right inverse, we can use \cite[Lemma A3]{AG}
	to construct a right homotopy inverse for the map $R\rightarrow W$.
	We obtain
\end{proof}
\begin{proposition}\label{prop3.3}
	If $n>1$, $R\simeq (C\rtimes T)\vee W$.
\end{proposition}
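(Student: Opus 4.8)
The plan is to upgrade the split short exact sequence of Theorem~\ref{theor3.1} to a homotopy equivalence, exploiting the fact that $R$ is a wedge of Moore spaces. Write $j\colon C\rtimes T\to R$ and $q\colon R\to W$ for the two maps in the cofibration sequence $C\rtimes T\to R\to W$ constructed at the beginning of Section~\ref{sec3}. The goal is to produce a right homotopy inverse $s\colon W\to R$ for $q$ and then show that $(j,s)$ is an equivalence.

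First I would record that $R$ is a wedge of $\bmod\ p^s$ Moore spaces and is simply connected: by \cite[Theorem~C]{G4} it is a retract of $\Sigma T\wedge T$, which is a wedge of Moore spaces by \cite[4.3(m)]{GT}, and a retract of a wedge of Moore spaces is again one; moreover $\Sigma T\wedge T$, hence $R$, is $(4n-2)$-connected, so simply connected when $n>1$. Next, Theorem~\ref{theor3.1} furnishes the split short exact sequence
\[
0\to\widetilde H_\ast(C\rtimes T)\xrightarrow{\ j_\ast\ }\widetilde H_\ast(R)\xrightarrow{\ q_\ast\ }\widetilde H_\ast(W)\to 0 ,
\]
so $q_\ast$ admits a right inverse. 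Since $R$ is a wedge of Moore spaces, \cite[Lemma~A3]{AG} then produces a map $s\colon W\to R$ with $qs\simeq 1_W$; in particular $q_\ast s_\ast=\mathrm{id}$, so $s_\ast$ is a section of $q_\ast$.

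Finally I would form $\Phi=(j,s)\colon (C\rtimes T)\vee W\to R$. On reduced homology $\Phi_\ast$ is $(j_\ast,s_\ast)\colon \widetilde H_\ast(C\rtimes T)\oplus\widetilde H_\ast(W)\to\widetilde H_\ast(R)$, and because $j_\ast$ is injective with image $\ker q_\ast$ and $s_\ast$ splits $q_\ast$, this is an isomorphism (write $x=(x-s_\ast q_\ast x)+s_\ast q_\ast x$, the first summand lying in $\mathrm{im}\,j_\ast$; injectivity is immediate from $q_\ast j_\ast=0$ and $q_\ast s_\ast=\mathrm{id}$). Since the source is simply connected for $n>1$ — $W$ is $(4n-2)$-connected by \textup{2.2(e)} and $C\rtimes T$ is $(2np-1)$-connected — and $R$ is simply connected, the homology Whitehead theorem promotes $\Phi$ to a homotopy equivalence, giving $R\simeq(C\rtimes T)\vee W$.

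There is no genuine obstacle here: Theorem~\ref{theor3.1} and Lemma~\ref{lem3.2} carry all the weight, and the remaining steps are formal. The one ingredient to watch is \cite[Lemma~A3]{AG}, which is precisely what allows the purely homological right inverse of $q_\ast$ to be realized by an actual map $s\colon W\to R$; this rests on $R$ being a wedge of Moore spaces, so that maps into $R$ are detected on homology.
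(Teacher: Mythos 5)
Your proof follows the paper's argument exactly: Theorem~3.1 furnishes the split short exact sequence, $R$ being a wedge of Moore spaces lets \cite[Lemma~A3]{AG} realize a section of $q_\ast$ by an actual map $s\colon W\to R$, and the resulting map $(j,s)$ is then promoted to an equivalence. The paper leaves the final Whitehead-theorem step implicit, whereas you spell it out, but this is the same proof.
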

\begin{proposition}\label{prop3.4}
	Suppose $n>1$. Then there is a homotopy
	fibration sequence:
	\[
	C\rtimes \Omega D \rightarrow G\rightarrow D
	\]
	and $\Omega G\simeq \Omega D\times \Omega(C\rtimes \Omega D)$.
\end{proposition}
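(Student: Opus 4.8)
The plan is to obtain the fiber of $q$ without any direct construction, by reading it off from the limiting diagram of~2.2(d) and the wedge decomposition of Proposition~3.3, and then to deduce everything from a purely formal identity for iterated half-smash products. So the main work is assembling prior results rather than a new argument; the one genuinely new ingredient is an elementary identity for half-smashes, and the one thing that needs a word of care is the identification of $R\to W$ with a pinch map.

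First I would use the limiting form of the pullback square noted in the proof of~2.2(d): the square
\[
\begin{CD} R @>>> W \\ @V{\rho}VV @VVV\\ G @>{q}>> D \end{CD}
\]
is a homotopy pullback, so the homotopy fiber of $q$ coincides with the homotopy fiber of $R\to W$. By Proposition~3.3, $R\simeq (C\rtimes T)\vee W$, and the map $R\to W$ in the cofibration sequence $C\rtimes T\to R\to W$ has a right homotopy inverse (the splitting produced in the proof of Lemma~3.2); hence, under this equivalence, $R\to W$ is the pinch map $(C\rtimes T)\vee W\to W$. The homotopy fiber of a pinch map $A\vee B\to B$ is $A\rtimes\Omega B$ — the same computation underlying the Whitehead product fibration $C\rtimes\Omega G\to C\vee G\to G$ of Theorem~B — so the homotopy fiber of $q$ is $(C\rtimes T)\rtimes\Omega W$.

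Now I would feed in the null homotopies of Theorem~A(c). The fibration sequence $\Omega D\to T\xrightarrow{\,i'\,}W\to D$ identifies $\Omega D$ with the homotopy fiber of $i'$, and since $i'\simeq \ast$ this fiber is $T\times\Omega W$; thus $\Omega D\simeq T\times\Omega W$. Combining this with the elementary identity $(A\rtimes B)\rtimes E\cong A\rtimes(B\times E)$ for based half-smashes (both sides are $(A\times B\times E)/(\ast\times B\times E)$), applied with $A=C$, $B=T$, $E=\Omega W$, gives
\[
\mathrm{fib}(q)\ \simeq\ (C\rtimes T)\rtimes\Omega W\ \cong\ C\rtimes(T\times\Omega W)\ \simeq\ C\rtimes\Omega D ,
\]
which is exactly the asserted fibration sequence $C\rtimes\Omega D\to G\to D$ (half-smash with a fixed space being a homotopy functor, so it may be applied to $\Omega D\simeq T\times\Omega W$).

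For the loop-space statement I would argue in the same spirit, and I expect this part to be the quickest. The fibration sequence $\Omega G\xrightarrow{\,h\,}T\xrightarrow{\,i\,}R\to G$ identifies $\Omega G$ with the homotopy fiber of $i$, and $i\simeq\ast$ by Theorem~A(c), so $\Omega G\simeq T\times\Omega R$; since $R\simeq(C\rtimes T)\vee W$ and the pinch $R\to W$ has a section, looping yields $\Omega R\simeq \Omega W\times\Omega\big((C\rtimes T)\rtimes\Omega W\big)$. Using $\Omega D\simeq T\times\Omega W$ and $C\rtimes\Omega D\simeq(C\rtimes T)\rtimes\Omega W$ from the previous step,
\[
\Omega G\ \simeq\ T\times\Omega R\ \simeq\ T\times\Omega W\times\Omega\big((C\rtimes T)\rtimes\Omega W\big)\ \simeq\ \Omega D\times\Omega(C\rtimes\Omega D).
\]
If one preferred a self-contained proof of the fibration, the alternative would be the usual difference construction giving a map $C\rtimes\Omega D\to\mathrm{fib}(q)$ and a check that it is a homology isomorphism; that route is the real obstacle, since it requires bookkeeping with the non-split extensions $0\to H_{2np^{k}}(C)\to H_{2np^{k}}(G)\to H_{2np^{k}}(D)\to 0$, whereas the argument above sidesteps it entirely.
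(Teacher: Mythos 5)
Your proof is correct, and it takes a genuinely different route from the paper's. The paper's proof is two lines: it observes that $\Omega G\simeq T\times\Omega R\to T\times\Omega W\simeq\Omega D$ has a right homotopy inverse (because $R\to W$ does, by the splitting of Proposition~3.3), and then cites \cite[Corollary~A7]{AG} applied directly to the cofibration $C\to G\to D$ to conclude that the fiber of $G\to D$ is $C\rtimes\Omega D$. You instead transport the problem along the pullback square of 2.2(d) to identify $\mathrm{fib}(G\to D)$ with $\mathrm{fib}(R\to W)$, use the wedge decomposition $R\simeq(C\rtimes T)\vee W$ to recognize $R\to W$ as a pinch and hence its fiber as $(C\rtimes T)\rtimes\Omega W$, and then rewrite this via the identity $(A\rtimes B)\rtimes E\cong A\rtimes(B\times E)$ and the splitting $\Omega D\simeq T\times\Omega W$ coming from $i'\simeq\ast$. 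The two arguments consume the same inputs (the splitting of $R\to W$ and the null homotopies from Theorem~A(c)), but yours is self-contained: it replaces the appeal to \cite[Corollary~A7]{AG} with the Ganea-type identification of the fiber of a pinch and an elementary point-set identity for iterated half-smashes, and it yields the loop-space decomposition by the same manipulations rather than as a separate observation. The one step worth flagging is the claim that $R\to W$ becomes the pinch under the equivalence $R\simeq(C\rtimes T)\vee W$; this is fine, but it uses specifically that the equivalence is assembled from the inclusion $C\rtimes T\to R$ and a section of $R\to W$, so that the restriction to the $C\rtimes T$ summand is null by the cofibration and the restriction to $W$ is the identity — worth saying explicitly, as an arbitrary equivalence of wedges need not respect the pinch.
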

\begin{proof}
	Since the map $R\rightarrow W$ has a right homotopy
	inverse, so does
	\[
	\Omega G\simeq T\times \Omega R\rightarrow T\times \Omega W\simeq
	\Omega D.
	\]
	We use this together with \cite[Corollary~A7]{AG} to
	prove that the fiber of the map $G\rightarrow D$ is $C\rtimes
	\Omega D$.
\end{proof}
\begin{proposition}\label{prop3.5}
	The composition
	\[
	C\rtimes T\rightarrow R\rightarrow G
	\]
	factors as
	\[
	C\rtimes T\xrightarrow{1\rtimes g}C\rtimes \Omega
	G\xrightarrow{\omega}C\vee G\xrightarrow{c\vee 1}G
	\]
	where $\omega$ is the ``Whitehead product map''
	which isthe fiber of the projection $C\vee G\xrightarrow{\pi_2}G$.
\end{proposition}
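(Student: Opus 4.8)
The plan is to reduce the statement to a computation with generalized Whitehead products. First I would use that $C=\Sigma C'$ is a suspension, $C'=\bigvee_{i\geqslant 1}P^{2np^i}(p^{r+i-1})$, so that the standard splitting of the half-smash of a co-$H$ space gives $C\rtimes T\simeq C\vee(C\wedge T)$, and hence $[C\rtimes T,G]\cong[C,G]\times[C\wedge T,G]$. It is therefore enough to check that the two composites of the statement agree after restriction to the wedge summand $C=C\rtimes\ast$ and after restriction to the summand $C\wedge T$. On $C$ both composites are $c$: the inclusion $C\rtimes\ast\hookrightarrow C\rtimes T$ is carried into $R$ by the lift $\widetilde c$ of $c$, so the left-hand composite restricts to $\rho\widetilde c=c$; while $1\rtimes g$ fixes $C\rtimes\ast$, $\omega$ sends it into $C\vee G$ by the inclusion of the first summand, and $c\vee 1$ then gives $c$.

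The content is the restriction to $C\wedge T$. For the right-hand composite, $1\rtimes g$ induces $1_C\wedge g\colon C\wedge T\to C\wedge\Omega G$ on smash summands, and on the summand $C\wedge\Omega G$ the map $\omega$ is, by the standard description of the fibre of a pinch map (this is the sense in which $\omega$ is ``the Whitehead product map''), the generalized Whitehead product $[\iota_C,j_G\varepsilon_G]$ of the inclusion $\iota_C\colon C\hookrightarrow C\vee G$ and $j_G\varepsilon_G\colon\Sigma\Omega G\to C\vee G$, with $\varepsilon_G$ the evaluation. Naturality of the Whitehead product in the target gives $(c\vee 1)\circ[\iota_C,j_G\varepsilon_G]=[c,\varepsilon_G]$, and naturality in the source, together with the identity $\widetilde g=\varepsilon_G\circ\Sigma g$ for the adjoint of $g$, gives $[c,\varepsilon_G]\circ(1_C\wedge g)=[c,\widetilde g]$. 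So the right-hand composite restricted to $C\wedge T$ is the Whitehead product $[c,\widetilde g]\colon C\wedge T\to G$.

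It remains to identify the left-hand composite $\rho\circ(C\rtimes T\to R)$ on $C\wedge T$ with the same $[c,\widetilde g]$. Since $R$ is the pullback of $W\to D$ along $j\colon G\to D$ and $jc$ is null by the cofibration $C\xrightarrow{c}G\to D$, the fibration induced over $C$ is canonically trivialized; this trivialization is exactly what produces the map $C\times T\to R$, and after composing with $\rho$ the map $C\times T\to R\xrightarrow{\rho}G$ becomes $c\circ\mathrm{pr}_C$. Passing to $C\rtimes T=(C\times T)/(\ast\times T)$ uses the null-homotopy of $i\colon T\to R$; but $i$ is null precisely because the connecting map $h\colon\Omega G\to T$ has the right homotopy inverse $g$, so the null-homotopy in force is the one built from $g$ followed by the canonical null-homotopy of $ih$. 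Comparing the two null-homotopies of $\rho i h$ identifies the twist introduced over the collapsed copy of $T$ with $\widetilde g\colon\Sigma T\to G$, and feeding this into the splitting $C\rtimes T\simeq C\vee(C\wedge T)$ shows that the $C\wedge T$-component of $\rho\circ(C\rtimes T\to R)$ is the Whitehead product of $c$ (the $C$-direction) with $\widetilde g$ (the $T$-direction), i.e.\ $[c,\widetilde g]$. Together with the agreement on $C$, this gives the proposition.

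The main obstacle is this last paragraph: one must unwind the construction of the map $C\rtimes T\to R$ carefully enough to verify that the null-homotopy of $i$ it uses is the one determined by $g$, and then carry out the standard but delicate comparison among the connecting map of the fibration $T\to R\to G$, the evaluation $\varepsilon_G$, and generalized Whitehead products, so as to pin the $C\wedge T$-component down to precisely $[c,\widetilde g]$ with no extra correction term. The remaining steps are formal.
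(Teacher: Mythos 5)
Your route is genuinely different from the paper's. You split $C\rtimes T\simeq C\vee(C\wedge T)$ using the co-$H$ structure on $C$ and aim to identify the $C\wedge T$-component of each composite with the generalized Whitehead product $[c,\widetilde g]$, passing the right-hand composite through the Ganea-type description of the fibre of $C\vee G\to G$ plus two naturality arguments. The paper instead presents $C\rtimes T$ (up to homotopy) as the pushout $C\times T\cup_T C^\ast T$ and reads both composites off directly on these two pieces: each restricts to $c\circ\pi_1$ on $C\times T$, and on the cone $C^\ast T$ each \emph{literally} factors through $\Sigma T$ as $\widetilde g$ --- for the left composite because $\rho i$ is strictly constant on the fibre, so the null homotopy supplied by $g$ collapses to the adjoint $\widetilde g$; for the right composite because $\mathrm{ev}\circ(\text{cone on }g)\colon C^\ast T\to PG\to G$ is exactly the adjunction construction. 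This sidesteps the Whitehead-product formalism entirely.

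The obstacle you honestly flag at the end --- pinning the $C\wedge T$-component of $\rho\circ(C\rtimes T\to R)$ to precisely $[c,\widetilde g]$ with no correction term, by tracking which null homotopy of $i$ is built into $C\rtimes T\to R$ --- is a genuine gap in the proposal as written, and it is exactly the content the paper's cone decomposition renders transparent. What you would have to prove there (that the chosen null homotopy of $i$, compared with the tautological one for $\rho i$, produces $\widetilde g\in[\Sigma T,G]$ and that feeding this into the co-$H$ splitting yields the Whitehead product) is what the paper establishes by direct inspection of the two cone restrictions, matching them term for term. So your plan is workable, but completing it would essentially reproduce the paper's pointwise comparison inside a heavier framework; the extra step through Whitehead-product naturality buys generality at the cost of re-deriving the identification it needs.
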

\begin{proof}
	$C\rtimes T\simeq C\times T\cup\ast\times C^{\ast}(T)$ where
	$C^{\ast}(T)$
	is the reduced cone on $T$. The
	composition:
	\[
	C\times T\cup \ast\times C^{\ast}(T)\simeq C\rtimes T\rightarrow
	R\rightarrow G
	\]
is given by:
\begin{align*}
	C\times T&\xrightarrow{\pi_1} C\xrightarrow{c}G\\
	\ast\times C^{\ast}T&\xrightarrow{} \Sigma
	T\xrightarrow{\widetilde{g}}G
\end{align*}
On the other hand, the inclusion of the fiber of
the projection $C\vee G\xrightarrow{\pi_2}\nobreak G$ is given by
\begin{gather*}
	C\times \Omega G\cup\ast\times PG\simeq C\rtimes\Omega G\rightarrow
	C\vee G\\
	C\times \Omega G\xrightarrow{\pi_1}C\rightarrow C\vee G\\
	\ast\times PG\xrightarrow{e\nu}G\rightarrow C\vee G
\end{gather*}
where $PG=\{\omega\colon I\rightarrow G\mid\omega(0)=\ast\}$ and
$e\nu(\omega)=\omega(1)$.

Theorem~B follows from 3.3, 3.4, and 3.5.

It would be desirable to have a better
understanding of the restriction:
\[
W\rightarrow R\xrightarrow{\rho}G
\]
What is clear is that the composition
\[
P^{2np^k}\left(p^{r+k-1}\right)\xrightarrow{a_k}W\rightarrow D
\]
is nonzero in $\bmod\  p$ homology, so $a_k$ induces
an isomorphism in integral cohomology in
dimension $snp^k$. It seems difficult to
identify the map
\[
P^{4np^k}\left(p^{r+k}\right)\rightarrow W
\]
as it is the first class of that order.
\end{proof}
\providecommand{\bysame}{\leavevmode\hbox to3em{\hrulefill}\thinspace}
\providecommand{\MR}{\relax\ifhmode\unskip\space\fi MR }
\providecommand{\MRhref}[2]{%
  \href{http://www.ams.org/mathscinet-getitem?mr=#1}{#2}
}
\providecommand{\href}[2]{#2}


\begin{thebibliography}{CMN79}
\bibitem[A]{A}
D.~Anick, \emph{Differential algebras in topology}, Research Notes in
  Mathematics, 3, A K Peters, Ltd., Wellesley, MA, 1993, xxvi+274 pp.


\bibitem[AG]{AG}
D.~Anick and B.~Gray, \emph{Small {$H$}-spaces related to {M}oore spaces},
  Topology \textbf{34} (1995), no.~4, 859--881.

\bibitem[CMN]{CMN}
F.~R. Cohen, J.~C. Moore, and J.~A. Neisendorfer, \emph{Torsion in homotopy
  groups}, Ann. of Math.\ (2) \textbf{109} (1979), no.~1, 121--168.

\bibitem[G1]{G1}
B.~Gray, \emph{On the homotopy groups of mapping cones}, Proc.\ London Math.\
  Soc.\ (3) \textbf{26} (1978), 497--520.

\bibitem[G2]{G2}
\bysame, \emph{On the iterated suspension}, Topology \textbf{27} (1988),
  301--310.

\bibitem[G3]{G3}
\bysame, \emph{{$EHP$} spectra and periodicity. {I}. {G}eometric
  constructions}, Trans.\ AMS \textbf{340} (1993), 595--616.

\bibitem[G4]{G4}
\bysame, \emph{On decompositions in homotopy theory}, Trans.\ Amer.\ Math.\
  Soc. \textbf{358} (2006), 3305--3328.

\bibitem[GT]{GT}
B.~Gray and S.~Theriault, \emph{An elementary construction of {A}nick's
  fibration}, submitted.

\bibitem[T]{T}
S.~D. Theriault, \emph{The 3-primary classifying space of the fiber of the
  double suspension}, Proc.\ AMS \textbf{136} (2008), no.~4, 1489--1499.

\end{thebibliography}
\end{document}